\newtheorem{lemma}{Lemma}[section]
\newtheorem{proposition}[lemma]{Proposition}
\newtheorem{remark}[lemma]{Remark}
\newtheorem{example}[lemma]{Example}
\newtheorem{theorem}{Theorem}
\newcommand{\proofend}{$\Box$\bigskip}
\newcommand{\C}{{\mathbb C}}
\newcommand{\R}{{\mathbb R}}
\def\proof{\paragraph{Proof.}}
\title{Outer symplectic billiard map at infinity}
\author{Peter Albers\footnote{
Institute for Mathematics,
Heidelberg University,
69120 Heidelberg,
Germany;
palbers@mathi.uni-heidelberg.de}
 \and 
Ana Chavez Caliz
\footnote{
Institute for Mathematics,
Heidelberg University,
69120 Heidelberg,
Germany;
anachavezcaliz@gmail.com}
\and
 Serge Tabachnikov\footnote{
Department of Mathematics,
Pennsylvania State University,
University Park, PA 16802,
USA;
tabachni@math.psu.edu}
} 
\date{}
\begin{document}

\maketitle

\begin{abstract}
We show that the second iteration $T^2$ of the outer symplectic billiard map with respect to a convex domain $M$ in a symplectic vector space is approximated by an explicit Hamiltonian flow for points far away from $M$. 

More precisely, denote by $N$ the symplectic polar dual of the symmetrization $M\ominus M$ of $M$. If we write $N$ as the unit level set of a 1-homogeneous function $H$, then the difference between $T^2$ and the time-2-Hamiltonian flow of $H$ applied to a point $x$ is smaller than $c/|x|$ for some constant $c$ depending only on $M$.

Moreover, we show that if an orbit escapes to infinity, then its distance to the origin grows not faster than $\sqrt{k}$ in the number of iterations. Finally, we prove that a $k$-periodic orbit needs to be close, in terms of $k$, to $M$.
\end{abstract}


\section{Introduction} \label{sect:intro}

The outer billiard map is an area-preserving transformation of the exterior of a strictly convex plane domain depicted in Figure \ref{map}. 
\begin{figure}[hbtp] 
\centering
\includegraphics[width=2.5in]{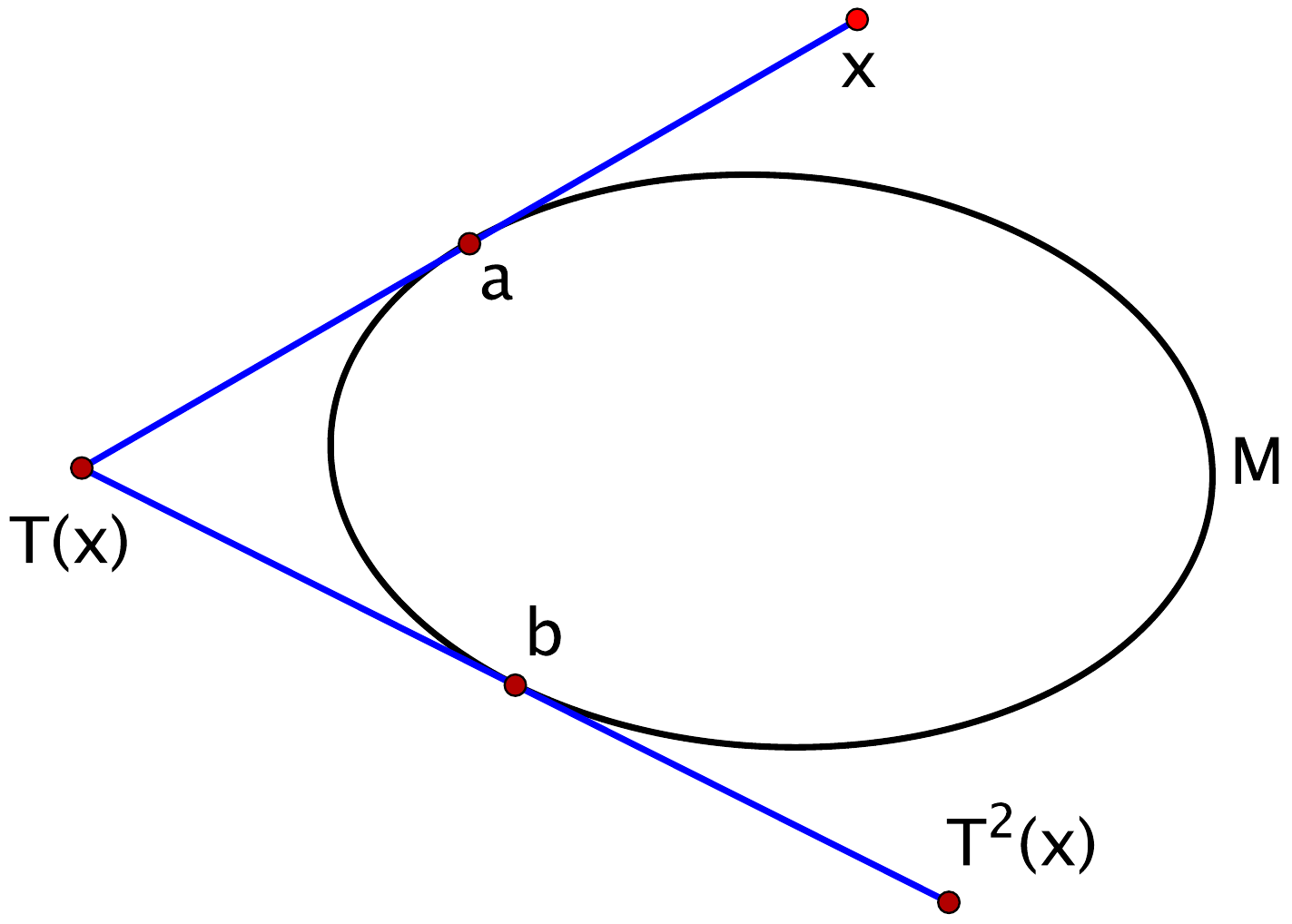}
\caption{Outer  billiard map $T$ about a convex curve $M$: $|x\, a|=|a\, T(x)|$,  $|T(x)\, b|=|b\, T^2(x)|$.}
\label{map}
\end{figure}
Introduced by B. Neumann in 1959, its study was put forward by J. Moser in the 1970s, and it has remained an active area of research since then, see \cite{Ha} for the history of this subject. 

Moser tackled the problem of whether the orbits of the outer billiard map could escape to infinity. He showed that if the outer billiard curve $M$ is smooth enough and strictly convex then the map $T$ far away from $M$ is a small perturbation of an integrable map, and therefore, by the KAM theory, it possessed invariant curves that prevented the orbits from escaping; see  \cite{Mo1,Mo2} and \cite{Do}.\footnote{Alas, R. Douady's thesis is not published and is hard to obtain.}

The outer billiard map can be defined in the symplectic space of any (even) dimension, of which the plane with its standard area form is an example.  The aim of this paper is to investigate the asymptotic behavior of this map in the multi-dimensional setting. The definition of the map is as follows.

Let $M$ be a smooth closed quadratically convex hypersurface in $\R^{2d}$ with its standard symplectic structure $\omega$. At every point $m \in M$ the oriented characteristic direction is defined: it is the kernel of the restriction of $\omega$ to the tangent hyperplane $T_m M$. If one identifies $\R^{2d}$ with $\C^d$, the characteristic direction is obtained from the outward normal by multiplying it by $\sqrt{-1}$. 

Given a point $x$ outside of $M$, one can prove that there is a unique point $m \in M$ such that the oriented line $x m$ is the oriented characteristic at $m$ (there is also a unique point $m' \in M$ such that the line $m'x$ is the oriented characteristic at $m'$). The {\it outer symplectic billiard map} $T$ is the reflection of $x$ through $m$. This map preserves the symplectic structure. See \cite{Ta1,Ta3} and, most recently, \cite{ACT} for details.

A peculiar property of the outer billiard map $T$ in the plane is that, in the first approximation, the orbits of $T^2$ ``at infinity" lie on homothetic centrally symmetric closed convex curves whose shape is determined by the outer billiard curve $M$ (namely, they are polar dual to the central symmetrization of  $M$), see \cite{Ta2} and the survey \cite{DT}. 

One can observe this behavior on a computer screen by rescaling so that $M$ appears as almost a point, the outer billiard map as almost the reflection in this point, and the evolution of a point under $T^2$ as almost a continuous motion. For example, if $M$ is the Reuleaux triangle, then its central symmetrization is a circle, and the dynamics of $T^2$ appear as a uniform circular motion, see Example \ref{tri}. This phenomenon is illustrated in Figure \ref{lp}.

\begin{figure}[hbtp] 
\centering
\includegraphics[width=2.5in]{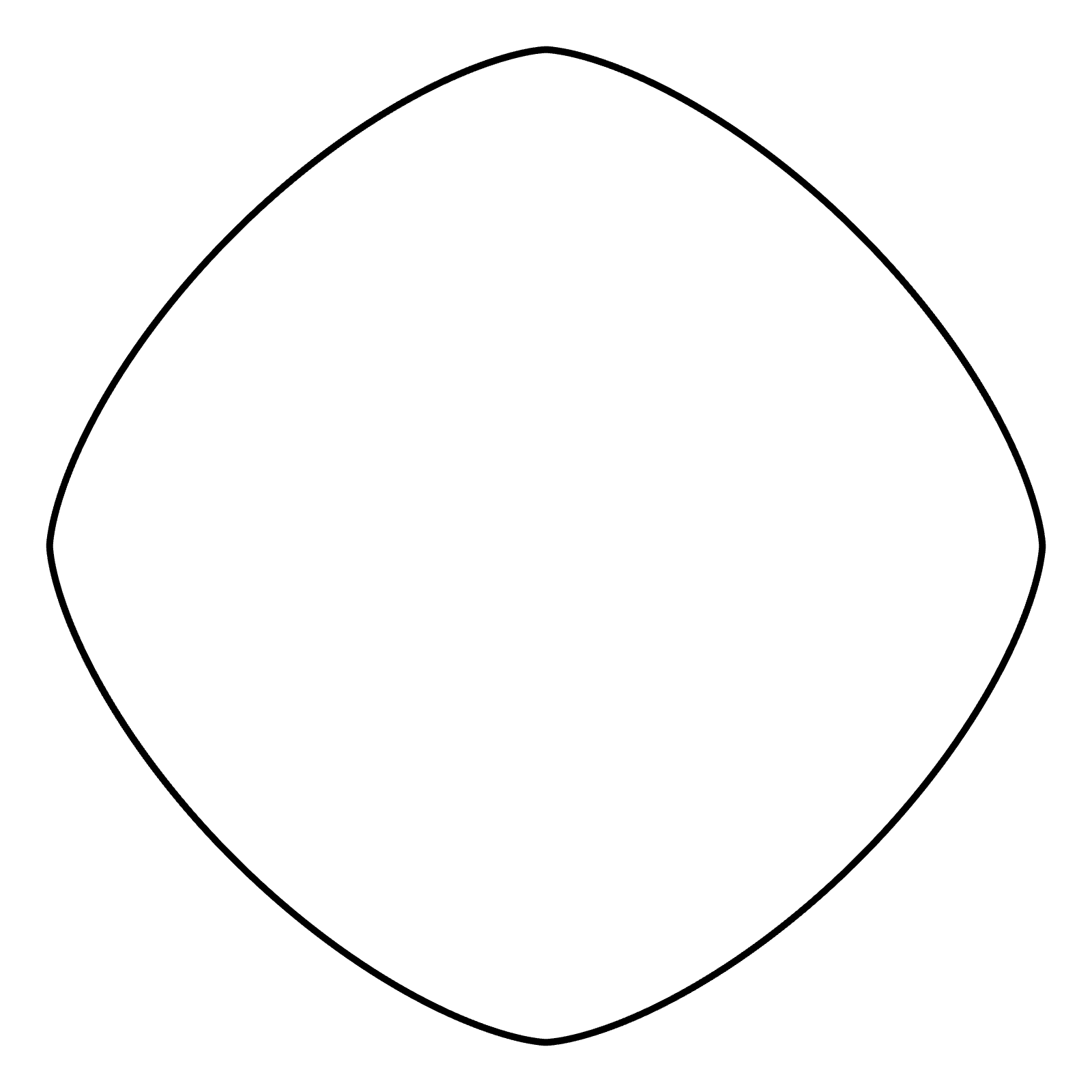}\qquad
\includegraphics[width=2.5in]{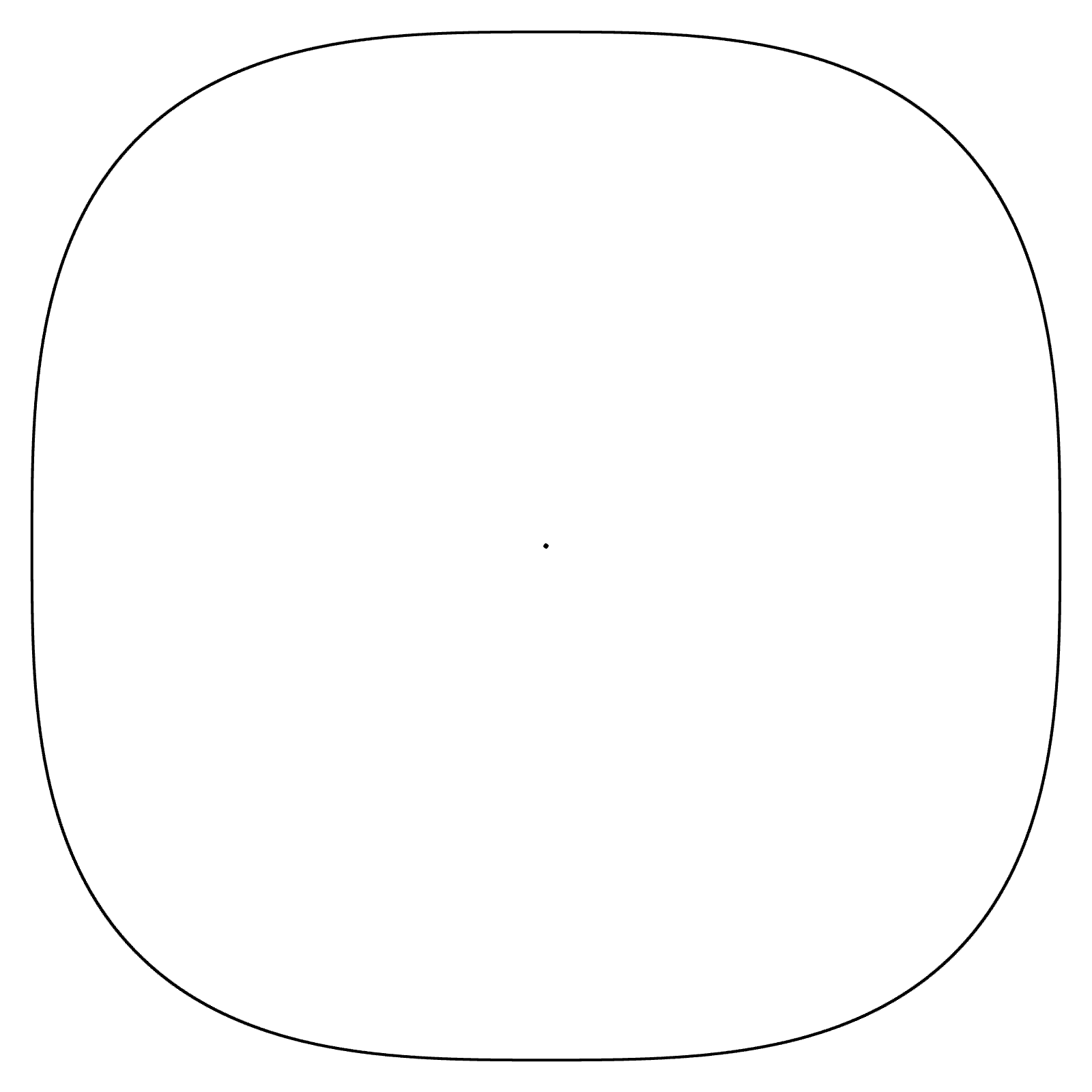}
\caption{Left: the outer billiard table is a unit circle in the $p$-norm with $p=1.5$. Right: an orbit of this outer billiard ``at infinity": it is a circle in the $p$-norm with $p=3$ (these two norms are dual to each other). The dot at the center is the outer billiard table from the left to scale.}
\label{lp}
\end{figure}
 
An informal explanation of this phenomenon is as follows. 

Given a direction $\xi$ in the plane, there is a unique support line to $M$ having this direction and a unique support line having the opposite direction.  Denote by $v(\xi)$ the vector that connects the tangency points of these support lines, see Figure \ref{supp}. 

\begin{figure}[hbtp] 
\centering
\includegraphics[width=3.7in]{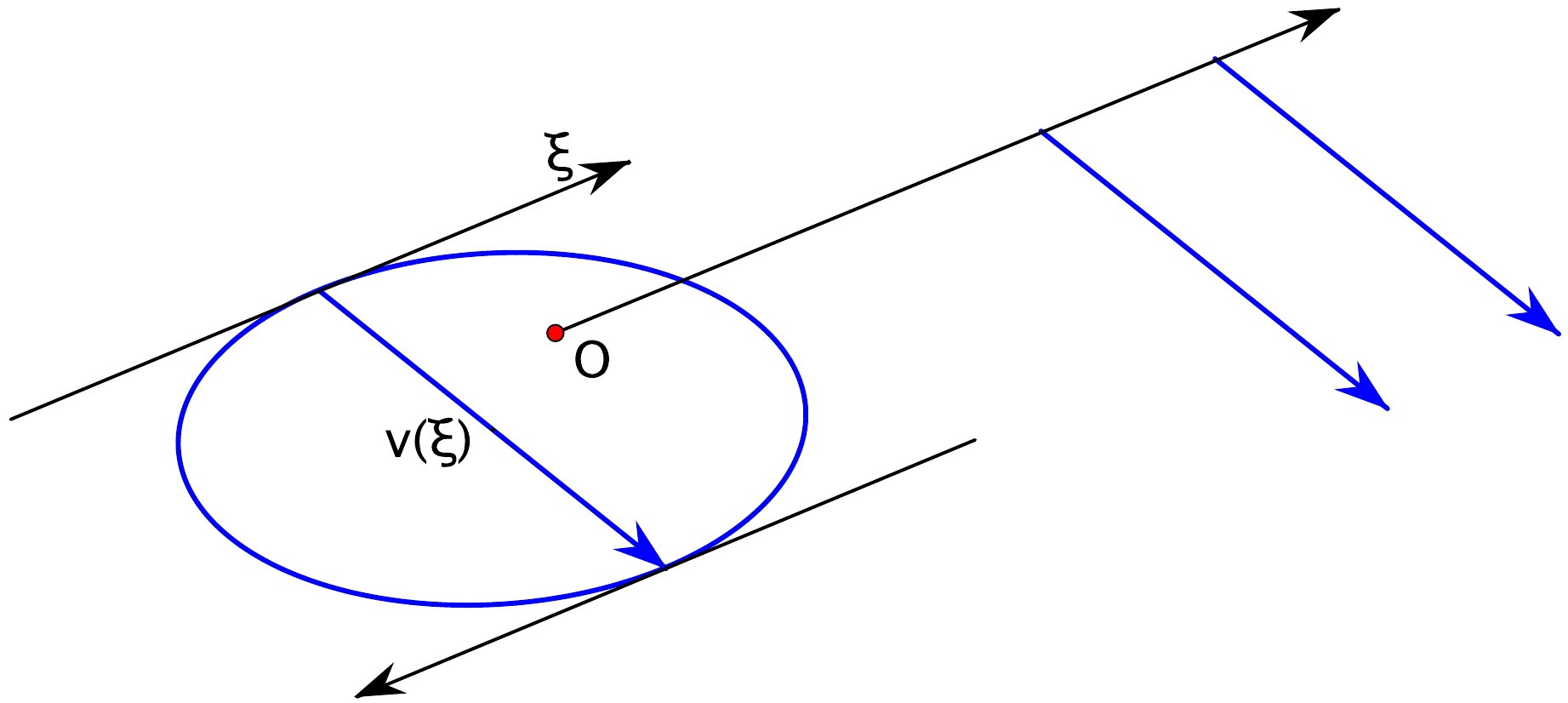}
\caption{Associating a vector with a direction and constructing a homogeneous vector field of degree zero.}
\label{supp}
\end{figure}

Assume that the origin $O$ is inside $M$. Define a homogeneous vector field of degree zero $V$ in the plane by placing the foot point of the vector $2v(\xi)$ at every point of the ray in the direction $\xi$ emanating from $O$. The integral curves of this vector field are the observed homothetic invariant curves ``at infinity". 

Indeed, consider the map $T^2$ far away from the origin, see Figure \ref{far}. The two support lines involved are nearly parallel and, by similar triangles, the vector $T^2(x)-x$ is twice the vector that connects the tangency points. Hence, $T^2(x)-x$ is approximated by the vector $V(x)$ and, after rescaling, the evolution of a point under $T^2$ appears on the computer screen as the time-1 flow of the vector field $V$. The integral curves of this vector field are homothetic copies of $(M\ominus M)^*$.

\begin{figure}[hbtp] 
\centering
\includegraphics[width=5.3in]{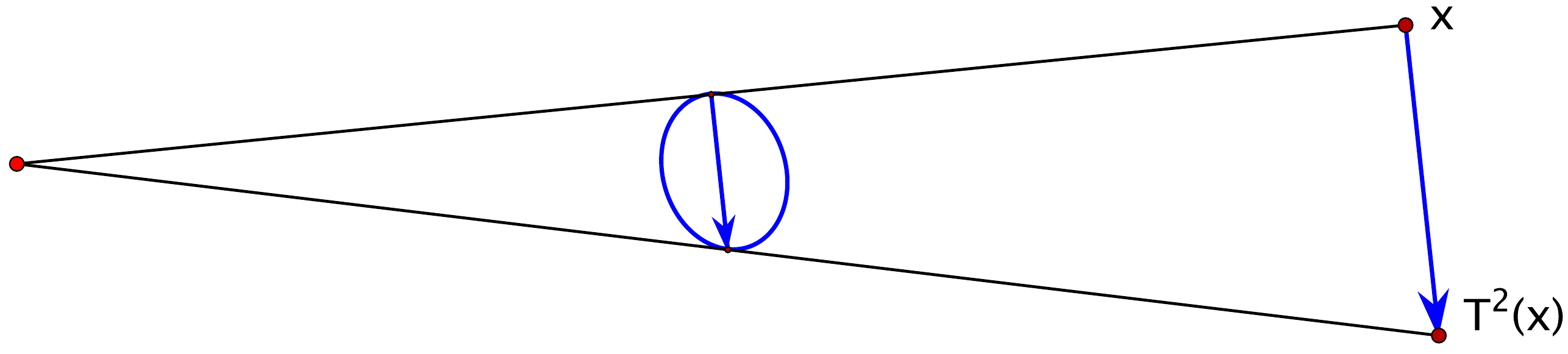}
\caption{The second iteration of the outer billiard map far away from the ``table".}
\label{far}
\end{figure}

Our main results, extending from dimension 2 to higher dimensions, comprise three theorems in Section \ref{sect:thm}. 

First, we construct a homogeneous of degree zero Hamiltonian vector field in $\R^{2d}$ whose Hamiltonian function is determined by the outer ``billiard table" $M$. We show that the second iteration of the outer symplectic billiard map is approximated by the time-$2$ flow of this field (Theorem \ref{thm:shadow}). 

Unlike the planar case, there is no general reason for orbits of the outer symplectic billiard map to stay bounded: even if the map is close to integrable, the surviving invariant tori do not separate and some orbits may escape through the gaps between them (Arnold diffusion). We show, in Theorem \ref{thm:esc}, that the rate of escape of the orbits of the outer symplectic billiard map to infinity is at most of order $\sqrt{k}$, where $k$ is the number of iterations. However, we expect the actual rate of escape to be substantially smaller. 

Not much is known about periodic points of the outer symplectic billiard map beyond the fact that, for every odd $k \ge 3$, there exists a $k$-periodic orbit (see \cite{ACT,Ta1,Ta3}). 
In Theorem \ref{thm:per}, we show that the periodic points at a large distance from the ``table" cannot have small periods. 


We finish this introduction by comparing the problem discussed in the present paper with a somewhat similar problem concerning the conventional (inner) billiards in a domain in $\R^d$ bounded by a closed strictly convex hypersurface $M$. 

An analog of the outer symplectic billiard orbits far away from the outer billiard table are glancing billiard trajectories that make small angles with $M$, that is, the orbits of the billiard ball map that are close to the boundary of the phase space. The theory of interpolating Hamiltonians, see \cite{Me,PS}, implies that the billiard ball map equals the time-1 map of a Hamiltonian vector field, composed with a smooth symplectic map that fixes the boundary of the phase space point-wise to all orders.

We also approximate the second iteration of the outer symplectic billiard map ``at infinity" by the flow of a Hamiltonian vector field whose Hamiltonian function is determined by the outer billiard table, but this approximation is considerably weaker than that in the theory of interpolating Hamiltonians.

\bigskip

{\bf Acknowledgements}. We thank Lael Costa for interesting discussions and numerical experiments.

PA acknowledges funding by the Deutsche Forschungsgemeinschaft (DFG, German Research Foundation) through Germany's Excellence Strategy EXC-2181/1 - 390900948 (the Heidelberg STRUCTURES Excellence Cluster), the Transregional Collaborative Research Center CRC/TRR 191 (281071066). 
ACC was funded as well by the DFG through Project-ID 281071066 – TRR 191.
ST was supported by NSF grant DMS-2404535, Simons Foundation grant MPS-TSM-00007747, and by a Mercator fellowship within the CRC/TRR 191. He thanks the Heidelberg University for its invariable hospitality.

\section{Symplectic polar duality and symmetrization} \label{sect:convex}

We start by discussing polar duality, in particular, in the symplectic setting. For that, we assume throughout that $M \subset W$ bounds, in the vector space $W$, a quadratically convex domain containing the origin $O$ in its interior. 
Under these convexity assumptions, for every $x\in M$, there is a unique $\xi\equiv\xi(x) \in W^*$  given by
\begin{equation} \label{eq:pol}
\ker\xi = T_x M\ \ {\rm and}\ \ \xi\cdot x =1,
\end{equation}
where $\xi\cdot x$ is the dual pairing/evaluation. This defines a map $x\mapsto \xi$ and its image is the polar dual hypersurface $M^*\subset W^*$. Due to our convexity assumptions, $M^*$ is diffeomorphic to $M$. 

Recall that polar duality is an involution: $(M^*)^*=M$.
Indeed, the first equation in (\ref{eq:pol}) means that the 1-form $\xi dx$ vanishes on $M$, and the differential of the second equation yields $\xi dx+xd\xi=0$. Hence the 1-form $xd\xi$ vanishes on $M^*$.

If $W$ is Euclidean space, then one may identify $W$ with $W^*$ using the inner product, and therefore consider $M^*$ as a hypersurface in $W$.

Consider now $(W,\omega)$, a symplectic vector space. This time, we identify $W$ with $W^*$ using the symplectic structure as follows. The vector $x\in W$ is identified with the covector $\omega(\cdot,x)\in W^*$. Therefore, as above, for every $x\in M$, there is a unique $R\equiv R(x)\in W$ with the properties
\begin{equation} \nonumber
\ker\omega(\cdot,R) = T_x M\ \ {\rm and}\ \ \omega(x,R) =1,
\end{equation}
that is,
\begin{equation*} \label{eq:spol}
\omega(v,R) = 0\;\;\forall v\in T_x M\ \ {\rm and}\ \ \omega(x,R) =1.
\end{equation*}
In other words, the vector $R(x)$ is the Reeb vector field of the contact form $\omega(x,\cdot)|_{T_xM}$. The image $M^*\subset W$ of the map $x\mapsto R(x)$ is the {\it symplectic polar} of $M$. 

\begin{remark}
{\rm The notion of the {symplectic polar} duality has recently appeared in the literature, see \cite{Be,BB,BK,GG}. By identifying $\R^{2d}=\C^d$, the symplectic polar differs from the Euclidean polar by the complex rotation $J$, given by the multiplication by $\sqrt{-1}$.
}
\end{remark}

\begin{example} \label{ex:2d}
{\rm Let $\gamma(t)$ be a star-shaped curve with respect to the origin, i.e., $\omega(\gamma,\gamma')>0$. Its symplectic polar is the curve $$
\gamma^*(t)=\frac{\gamma'(t)}{\omega(\gamma(t),\gamma'(t))}.
$$
Indeed, $\omega(\gamma(t),\gamma^*(t))=1$ and $\omega(\gamma'(t),\gamma^*(t))=0$. If, in addition, $\gamma$ is a smooth quadratically convex curve, then its symplectic polar curve $\gamma^*$ is also smooth and convex. 
}
\end{example}

Let us add an extra assumption: $M$ is the unit hypersurface of a positively homogeneous function $f: W \to \R$ of degree one.
Its Hamiltonian vector field $X_f$ is defined by $\omega(\cdot, X_f) = df (\cdot)$. Recall that the Hamiltonian vector field of any function, whose level set is $M$, points into the characteristic direction $\ker\omega|_{T_xM}=\ker\omega(x,\cdot)|_{T_xM}$ of $M$ at $x$.

\begin{lemma} \label{lm:sgr}
For all $x$ in $M=\{f=1\}$ we have the equality
\begin{equation}\nonumber
X_f(x)=R(x)\;.
\end{equation}
\end{lemma}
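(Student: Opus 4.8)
The plan is to unwind the two defining conditions of the Reeb vector $R(x)$ and check that $X_f(x)$ satisfies both. Recall $R(x)$ is characterized uniquely by $\omega(v,R(x))=0$ for all $v\in T_xM$ together with the normalization $\omega(x,R(x))=1$; so it suffices to verify these two equations with $R(x)$ replaced by $X_f(x)$.

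First I would check the kernel/tangency condition. By definition $\omega(\,\cdot\,,X_f)=df(\cdot)$, so for $v\in T_xM=\ker df|_x$ we get $\omega(v,X_f(x))=df(v)=0$. This already shows $X_f(x)$ spans the characteristic line $\ker\omega|_{T_xM}$, i.e. it is proportional to $R(x)$; it remains only to pin down the scalar.

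The second step is the normalization, and this is where the homogeneity hypothesis enters: here $f$ is positively homogeneous of degree one, so Euler's identity gives $df_x(x)=f(x)$. Evaluating the defining relation of $X_f$ on the vector $x$ itself yields $\omega(x,X_f(x))=df_x(x)=f(x)=1$ since $x\in M=\{f=1\}$. Thus $X_f(x)$ satisfies exactly the two conditions that characterize $R(x)$, and by uniqueness $X_f(x)=R(x)$.

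There is no real obstacle here; the only point requiring a little care is that the normalization $\omega(x,R(x))=1$ (rather than some other constant) is matched precisely because of the degree-one homogeneity and the choice of $1$ as the level — a different homogeneity degree $k$ would instead give $\omega(x,X_f(x))=k$, so the statement is genuinely special to $1$-homogeneous defining functions. One should also note in passing that $T_xM$ is a hyperplane and $\omega|_{T_xM}$ has a one-dimensional kernel (by quadratic convexity and $2d$-dimensionality), which is what makes "proportional, with the right scalar" enough to conclude equality.
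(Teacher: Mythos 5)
Your proof is correct and follows the same route as the paper: verify that $X_f(x)$ lies in the characteristic direction (which the paper recalls just before the lemma and you spell out from $\omega(v,X_f)=df(v)=0$), then pin down the normalization $\omega(x,X_f(x))=f(x)=1$ via Euler's identity for $1$-homogeneous functions. No gaps; the remark about why degree-one homogeneity is essential is a nice touch but not a deviation.
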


\begin{proof} 
Since $X_f(x)$ already points into the characteristic direction it suffices to compute
$$
\omega(x, X_f(x))= df(x)x 
=f(x)=1,
$$
where we used the Euler Homogeneous Function Theorem in the second equality.
\proofend
\end{proof}

To simplify the notation, we write $a\sim b$ for two vectors $a$ and $b$ that are proportional with a positive coefficient.

\begin{lemma}\label{lem:outer_normal_Reeb_for_exterior_point}
For a point $x$ in the exterior of $M$, there exists a unique point $n_-(x)\in M$, resp.~$n_+(x) \in M$, such that $ R(n_-(x)) \sim -x$, resp.~$ R(n_+(x)) \sim x$. Explicitly, in terms of the normal Gauss map $G:M\to S^{2d-1}$, we have
\begin{equation}\nonumber
n_+(x)=G^{-1}\left(-\frac{Jx}{|x|}\right),\quad n_-(x)=G^{-1}\left(\frac{Jx}{|x|}\right).
\end{equation}
\end{lemma}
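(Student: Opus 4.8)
The plan is to rewrite the Reeb vector $R(m)$ at a boundary point $m\in M$ explicitly in terms of the Gauss map $G$, and then read off $n_\pm(x)$ simply by inverting $G$. First I would pin down the characteristic line in Euclidean terms. For the standard structures one has $\omega(u,v)=\langle Ju,v\rangle$ with $J$ orthogonal, and $T_mM=G(m)^{\perp}$. The $\omega$-orthogonal complement $(T_mM)^{\omega}$ is one-dimensional and, since $\omega|_{T_mM}$ is a skew form on an odd-dimensional space and hence degenerate, it is contained in $T_mM$; thus it equals the characteristic line $\ker(\omega|_{T_mM})$. Since $\omega(JG(m),v)=\langle J^{2}G(m),v\rangle=-\langle G(m),v\rangle=0$ for all $v\in T_mM$, this line is spanned by $JG(m)$, in agreement with the description of the characteristic direction in Section~\ref{sect:intro}. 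The vector $R(m)$ satisfies $\omega(v,R(m))=0$ on $T_mM$, so $R(m)=\lambda(m)\,JG(m)$, and the normalization gives $1=\omega(m,R(m))=\lambda(m)\langle Jm,JG(m)\rangle=\lambda(m)\langle m,G(m)\rangle$. The number $h(m):=\langle m,G(m)\rangle$ is the value at $G(m)$ of the support function of the convex body bounded by $M$; since $O$ lies in its interior, $h(m)>0$. Hence $\lambda(m)=1/h(m)>0$ and $R(m)\sim JG(m)$ for every $m\in M$.

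Next I would translate the two defining conditions. Using $J^{-1}=-J$, $|Jx|=|x|$, and that $G$ takes values in $S^{2d-1}$, the condition $R(n_+(x))\sim x$ becomes $JG(n_+(x))\sim x$, i.e. $G(n_+(x))\sim -Jx$, i.e. $G(n_+(x))=-Jx/|x|$; similarly $R(n_-(x))\sim -x$ becomes $G(n_-(x))=Jx/|x|$. Existence and uniqueness of $n_\pm(x)$ then reduce to invertibility of the Gauss map: because $M$ is quadratically convex, $G:M\to S^{2d-1}$ is a diffeomorphism, so each of the two equations has a unique solution, namely $n_\pm(x)=G^{-1}(\mp Jx/|x|)$, which is the asserted formula.

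The argument is essentially bookkeeping, and I do not expect a genuine obstacle; the only steps that need care are the sign of $\lambda(m)$ — the one place where the hypothesis that $O$ lies inside $M$ is used — and the appeal to the standard fact that quadratic convexity makes the Gauss map a diffeomorphism. I note in passing that the hypothesis that $x$ be exterior to $M$ is not actually used here: the same formula returns a point of $M$ for any $x\neq O$, but the exterior case is the one relevant to the billiard map.
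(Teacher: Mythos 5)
Your proof is correct and follows essentially the same route as the paper: both arguments reduce the defining conditions $R(n_\pm(x))\sim\pm x$ to $G(n_\pm(x))=\mp Jx/|x|$ and then invoke the fact that quadratic convexity makes the Gauss map a diffeomorphism. The only cosmetic difference is that you re-derive $R(m)\sim JG(m)$ from scratch (with the positivity of the coefficient coming from the support function), whereas the paper gets the same relation by citing Lemma~\ref{lm:sgr} and the identity $R=X_f=J\nabla f$.
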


\begin{proof}
To simplify notation, we work in $W=\R^{2d}$ with its standard inner product, symplectic form, and complex structure $J$. Due to the convexity properties of $M$, the normal Gauss map
\begin{equation}\nonumber
\begin{aligned}
G:M&\to S^{2d-1}\\
q&\mapsto \frac{\nabla f(q)}{|\nabla f(q)|}
\end{aligned}
\end{equation} 
is a diffeomorphism. For $x$ in the exterior of $M$ we consider the point
\begin{equation}\nonumber
q:=G^{-1}\left(\frac{Jx}{|x|}\right)\in M.
\end{equation}
Then $R(q)=X_f(q)=J\nabla f(q)$, together with 
\begin{equation}\nonumber
\nabla f(q)\sim\frac{\nabla f(q)}{|\nabla f(q)|}=G(q)=\frac{Jx}{|x|}\sim Jx,
\end{equation}
implies that $R(q)=J\nabla f(q)\sim -x$. In other words,
\begin{equation}\nonumber
n_-(x):= q =G^{-1}\left(\frac{Jx}{|x|}\right).
\end{equation}
Similarly, we obtain
\begin{equation}\nonumber
n_+(x):=G^{-1}\left(-\frac{Jx}{|x|}\right).
\end{equation}
\proofend
\end{proof}

We point out that forming the symplectic polar is not quite a duality. 

\begin{lemma}
If $R^*$ denotes the Reeb vector field of $M^*$, then 
$$
R^*\circ R=-\mathrm{id}_M.
$$ 
In particular, 
\begin{equation}\nonumber
(M^*)^*=-M.
\end{equation}
\end{lemma}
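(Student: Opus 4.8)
The plan is to verify directly that, for $x\in M$, the vector $-x$ satisfies the two equations characterizing the Reeb vector field $R^*$ of $M^*$ at the point $R(x)\in M^*$, and then to conclude by the uniqueness of such a vector.

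First I would record the fact about tangent spaces that is needed: by the convexity hypotheses the map $x\mapsto R(x)$ is a diffeomorphism from $M$ onto $M^*$ (recall that the symplectic polar differs from the Euclidean polar by the complex rotation $J$, and the Euclidean polar of $M$ is diffeomorphic to $M$), so $T_{R(x)}M^*=dR_x(T_xM)$. Then comes the key one-line differentiation: the identity $\omega(x,R(x))=1$ holds identically along $M$, so differentiating it in a direction $v\in T_xM$ gives
\[
\omega(v,R(x))+\omega(x,dR_x(v))=0 .
\]
The first summand vanishes because $R(x)$ spans the characteristic line of $T_xM$, i.e.\ by the first defining property of $R$. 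Hence $\omega(x,w)=0$ for every $w\in T_{R(x)}M^*$; since $x\neq O$ and the symplectic orthogonal of the hyperplane $T_{R(x)}M^*$ is one--dimensional, this says precisely that $-x$ spans $\bigl(T_{R(x)}M^*\bigr)^{\omega}$, which is exactly the first equation defining $R^*$ at $R(x)$.

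It then only remains to fix the normalization: $\omega\bigl(R(x),-x\bigr)=-\omega\bigl(R(x),x\bigr)=\omega\bigl(x,R(x)\bigr)=1\neq 0$, so $R^*$ is indeed well defined at $R(x)$ and must equal $-x$. This proves $R^*\circ R=-\mathrm{id}_M$, and applying $R^*$ to $M^*=R(M)$ yields $(M^*)^*=R^*(R(M))=-M$. The only step requiring a little care is the identification $T_{R(x)}M^*=dR_x(T_xM)$, that is, the fact that $R$ is an immersion onto $M^*$; everything after that is immediate from the definitions and parallels the computation already used in the excerpt for ordinary polar duality (where $\xi\cdot x=1$ together with $\xi\cdot dx=0$ on $M$ gives $x\cdot d\xi=0$ on $M^*$).
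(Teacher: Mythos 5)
Your proof is correct and follows essentially the same route as the paper: differentiate the normalization $\omega(x,R(x))=1$ along $M$, use the defining property $\omega(v,R(x))=0$ for $v\in T_xM$ to conclude that $-x$ spans the characteristic direction of $M^*$ at $R(x)$, and then check the normalization $\omega(R(x),-x)=1$. The only difference is cosmetic — you make explicit the identification $T_{R(x)}M^*=dR_x(T_xM)$, which the paper uses implicitly.
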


\begin{proof}
Linearizing the equation $\omega(x,R(x))=1$ we obtain
\begin{equation}\nonumber
\omega(x, DR(x)\hat x)=\omega(R(x),\hat x)
\end{equation}
for all $x\in M$ and $\hat x\in T_xM$.
Next, we set $a:=R(x)\in M^*$ and verify the equality
\begin{equation}\nonumber
R^*(a)=-x.
\end{equation}
We first check that $x$ points in the characteristic direction of $M^*$ in $a$, that is, $\omega(x,u)=0$ for all $u\in T_aM^*$.

For this we use $DR(x):T_xM\to T_{a}M^*$ to write $u=DR(x)\hat x$ for some $\hat x\in T_xM$ and the previous identity to compute
\begin{equation}\nonumber
\begin{aligned}
\omega(x,u)&=\omega(x,DR(x)\hat x)\\
&=\omega(R(x),\hat x)\\
&=0,
\end{aligned}
\end{equation}
by definition of $R(x)$. The equation $\omega(a, R^*(a))=1$ for $R^*(a)=-x$ follows almost tautologically from the one for $a=R(x)$:
\begin{equation}\nonumber
1=\omega(x, R(x))=\omega(x,a)=\omega(a,-x).
\end{equation}
We conclude that indeed $R^*(a)=-x$ for $a=R(x)$, i.e., $R^*(R(x))=-x$.
\proofend 
\end{proof}


This lemma leads us to the notion of central symmetrization. Before that, we prove the following.

\begin{lemma}\label{lem:duality_for_n_plus}
We write, as above, $M=\{f=1\}$ and similarly $M^*=\{f^*=1\}$ for 1-homogeneous functions $f,f^*:\R^{2d}\to\R$. For a point $x$ in the exterior of $M$ and $M^*$ we have the equalities
\begin{equation}\nonumber
-n_+(x)=R^*\left(\frac{x}{f^*(x)}\right), \quad -n_-(x)=R^*\left(\frac{-x}{f^*(-x)}\right).
\end{equation}
\end{lemma}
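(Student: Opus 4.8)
The plan is to unwind both sides using the Reeb-vector characterizations already established, and show that the vectors agree by matching characteristic directions and the defining normalization. Recall that $n_+(x)\in M$ is characterized by $R(n_+(x))\sim x$, and by Lemma \ref{lem:outer_normal_Reeb_for_exterior_point} concretely $n_+(x)=G^{-1}(-Jx/|x|)$. On the other hand, since $M^*$ is the image of the Reeb map $R:M\to M^*$ and $R^*\circ R=-\mathrm{id}_M$, every point $a\in M^*$ can be written as $a=R(q)$ with $q\in M$, and then $R^*(a)=-q$. So the content of the claimed identity $-n_+(x)=R^*(x/f^*(x))$ is equivalent to asserting that $x/f^*(x)=R(n_+(x))$, i.e. that the point on $M^*$ ``in the direction of $x$'' is precisely the Reeb vector of $M$ at $n_+(x)$.

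First I would verify that $R(n_+(x))$ is positively proportional to $x$: this is the defining property $R(n_+(x))\sim x$ from Lemma \ref{lem:outer_normal_Reeb_for_exterior_point}. Second, I would note that $R(n_+(x))\in M^*=\{f^*=1\}$, so $R(n_+(x))$ is the \emph{unique} point of the ray $\R_{>0}\cdot x$ lying on $M^*$. Third, I would identify that same point directly: writing $x=f^*(x)\cdot(x/f^*(x))$ and using 1-homogeneity of $f^*$, we get $f^*(x/f^*(x))=1$, so $x/f^*(x)$ is also on $M^*$ and on the ray through $x$; by uniqueness $R(n_+(x))=x/f^*(x)$. Finally, apply $R^*$ and use $R^*\circ R=-\mathrm{id}_M$ to conclude $R^*(x/f^*(x))=R^*(R(n_+(x)))=-n_+(x)$. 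The statement for $n_-$ follows identically, replacing $x$ by $-x$: indeed $R(n_-(x))\sim -x$, so $R(n_-(x))=(-x)/f^*(-x)$, and applying $R^*$ gives $-n_-(x)=R^*(-x/f^*(-x))$.

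I expect the main (minor) obstacle to be bookkeeping of which ray and which sign: one must be careful that $n_+(x)$ is the point whose Reeb vector points \emph{towards} $x$ (not away), and correspondingly that the relevant scaling is by $f^*(x)$ for $n_+$ but by $f^*(-x)$ for $n_-$, since $f^*$ need not be even. I would also make explicit the standing assumption that $x$ lies outside both $M$ and $M^*$ so that $f^*(x)>1$ and $f^*(-x)>1$ and all the division steps are legitimate and land in the exterior region where $n_\pm$ are defined. Apart from these sign and domain checks, the argument is a direct chain of the three lemmas already proved.
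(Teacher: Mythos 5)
Your argument is correct and follows essentially the same route as the paper: identify $R(n_+(x))$ as the unique point of $M^*$ on the positive ray through $x$, observe by $1$-homogeneity that this point is $x/f^*(x)$, and then apply $R^*\circ R=-\mathrm{id}_M$; the case of $n_-$ is handled by the same computation with $-x$. The only (inessential) quibble is that the divisions need only $f^*(\pm x)>0$, which holds since $O$ is interior to $M^*$, rather than $f^*(\pm x)>1$.
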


\begin{proof}
Recall from above that $R(n_+(x))\sim x$ and $R(n_+(x))\in M^*$ to conclude
\begin{equation}\nonumber
R(n_+(x))=\frac{x}{f^*(x)}.
\end{equation}
Using $R^*\circ R=-\mathrm{id}_M$ we see directly the first claimed equality. The second follows similarly.
\proofend
\end{proof}

Next, we recall the process of symmetrization for a smooth closed quadratically convex hypersurface $M\subset \R^d$, and assume that the origin $O$ is inside $M$. The normal Gauss map $G:M \to S^{d-1}$ is a diffeomorphism, and one can use its inverse to parameterize  $M$. This is done via the {\it support function} $p:S^{d-1}\to\R$, the distance from $O$ to the tangent hyperplane of $M$ as a function of the unit outer normal vector. Namely, if $v \in S^{d-1}$ is an outward normal vector at the point $x\in M$, then one has
$$
x(v)=p(v)v+\nabla p(v),
$$
where the gradient is taken in the standard spherical metric (see, e.g., \cite{San}). In particular, we have the equality $G(x(v))=v$, i.e., $x(v)=G^{-1}(v)$.

The {\it symmetrization} of $M$ is the centrally symmetric hypersurface $\overline M$ whose support function is  $\bar p(v)=p(v)+p(-v)$. The origin $O$ is the center of $\overline M$. Equivalently, $\overline M$ is the Minkowski sum of $M$ and $-M$, its centrally symmetric image with respect to point $O$.

\begin{figure}[h] 
\centering
\includegraphics[width=2.5in]{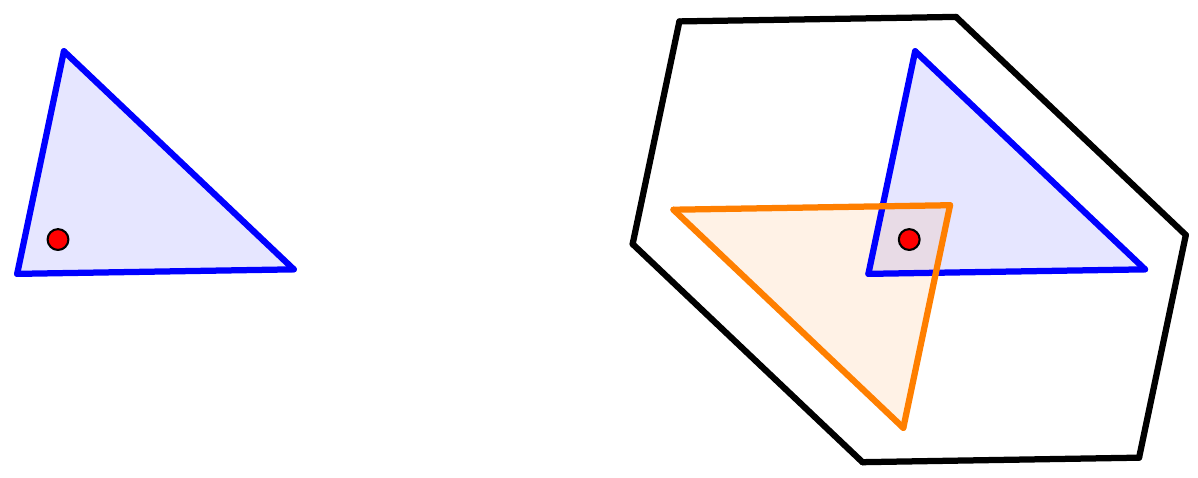}
\caption{A triangle and its symmetrization.}
\label{triangles}
\end{figure}

\begin{example} \label{tri}
{\rm  
We provide some examples of symmetrization. A trivial example is that of a centrally symmetric body: its symmetrization is its homothetic copy, twice as large. 
\begin{enumerate}   \renewcommand{\theenumi}{\roman{enumi}}\renewcommand{\labelenumi}{(\theenumi)} 
\item Let $M$ be a triangle. Its symmetrization is an affine-regular hexagon, see Figure \ref{triangles}.

\item Let $M$ be a figure of constant width. Since $p(v)+p(-v)$ is constant, $\overline M$ is a circle, see Figure \ref{Reuleaux}.

\item Let $M$ be a semicircle. Its symmetrization is a stadium, the convex hull of two tangent congruent circles. The symplectic polar of the stadium is bounded by two arcs of parabolas, sharing the axis and intersecting at right angles, see Figure \ref{halfcircle}. 
\end{enumerate}
}
\end{example}

\begin{figure}[h] 
\centering
\includegraphics[width=3in]{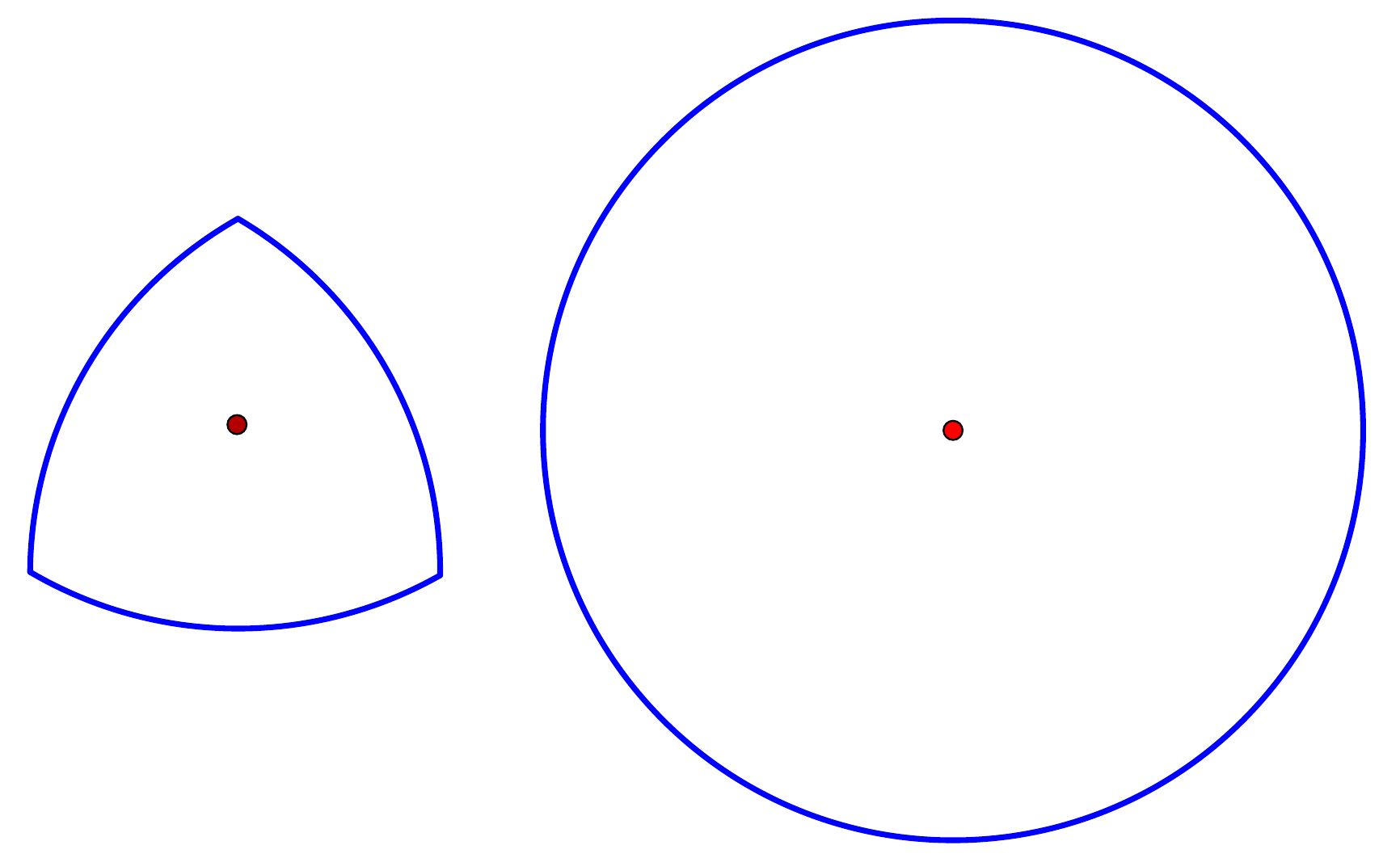}
\caption{A Reuleaux triangle and its symmetrization.}
\label{Reuleaux}
\end{figure}

\begin{figure}[h] 
\centering
\includegraphics[width=4 in]{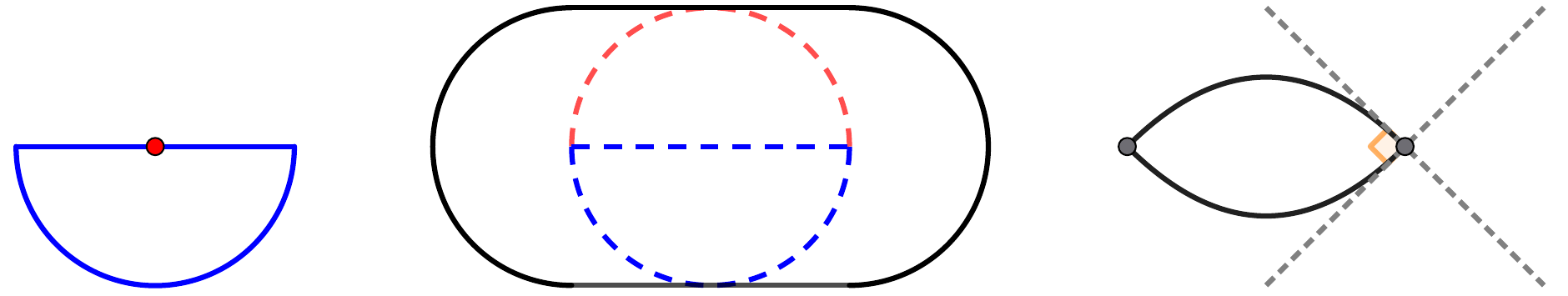}
\caption{The symmetrization of a half-circle and the symplectic polar of the symmetrization.}
\label{halfcircle}
\end{figure}

The following Lemma computes the inverse of the Gauss map of the symmetrization.

\begin{lemma} \label{lm:symm}
Let $\bar x(v) \in \overline M$ be the point whose unit outer normal is $v$. Then $\bar x(v)=x(v)-x(-v)$. That is, the Gauss map $\overline G:\overline M\to S^{d-1}$ of the symmetrization $\overline M$ of $M$ satisfies
\begin{equation}\nonumber
\overline G^{-1}(v)= G^{-1}(v)- G^{-1}(-v).
\end{equation}
\end{lemma}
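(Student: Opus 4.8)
The plan is to use the representation $x(v) = p(v)v + \nabla p(v)$ together with the defining property of the support function of the symmetrization, namely $\bar p(v) = p(v) + p(-v)$. Since the symmetrization $\overline M$ is itself a smooth closed quadratically convex hypersurface (being a Minkowski sum of such), the same formula applies to it: if $\bar x(v) \in \overline M$ is the point with unit outer normal $v$, then $\bar x(v) = \bar p(v) v + \nabla \bar p(v)$, where the gradient is taken in the spherical metric on $S^{d-1}$. So the whole statement reduces to a computation of $\nabla \bar p$.

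The key step is the observation that the spherical gradient $\nabla$ interacts with the antipodal map $a(v) = -v$ in a controlled way. Writing $q(v) := p(-v) = (p \circ a)(v)$, I would compute $\nabla q(v)$ via the chain rule for the spherical metric. Because the antipodal map $a$ is an isometry of $S^{d-1}$ whose differential at $v$ is $-\mathrm{id}$ on $T_v S^{d-1} = T_{-v}S^{d-1}$ (identifying both tangent spaces with $v^\perp$), one gets $\nabla q(v) = -(\nabla p)(-v)$. Hence
\begin{equation}\nonumber
\nabla \bar p(v) = \nabla p(v) + \nabla q(v) = \nabla p(v) - (\nabla p)(-v).
\end{equation}
Therefore
\begin{equation}\nonumber
\bar x(v) = \bar p(v)\, v + \nabla \bar p(v) = \bigl(p(v)+p(-v)\bigr) v + \nabla p(v) - (\nabla p)(-v).
\end{equation}

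Finally I would group terms to recognize $x(v)$ and $x(-v)$. Using $x(v) = p(v) v + \nabla p(v)$ and $x(-v) = p(-v)(-v) + (\nabla p)(-v) = -p(-v) v + (\nabla p)(-v)$, subtracting gives $x(v) - x(-v) = \bigl(p(v)+p(-v)\bigr) v + \nabla p(v) - (\nabla p)(-v)$, which is exactly the expression just obtained for $\bar x(v)$. This proves $\bar x(v) = x(v) - x(-v)$, and since $\bar x(v) = \overline G^{-1}(v)$ and $x(\pm v) = G^{-1}(\pm v)$, the stated identity $\overline G^{-1}(v) = G^{-1}(v) - G^{-1}(-v)$ follows.

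The only genuinely delicate point is the sign bookkeeping in the identity $\nabla q(v) = -(\nabla p)(-v)$: one must be careful that the factor of $-1$ from the differential of the antipodal map is not cancelled by a second factor of $-1$ coming from raising an index, i.e., that the antipodal map being an \emph{isometry} means there is exactly one sign, not zero and not two. A clean way to see it without index juggling is to note that for any smooth $\phi$ on $S^{d-1}$ and any curve $\gamma$ with $\gamma(0) = v$, $\gamma'(0) = w$, one has $\langle \nabla(\phi\circ a)(v), w\rangle = \frac{d}{dt}\big|_0 \phi(-\gamma(t)) = \langle (\nabla\phi)(-v), -w\rangle = \langle -(\nabla\phi)(-v), w\rangle$, and since $w \in v^\perp$ is arbitrary this forces $\nabla(\phi\circ a)(v) = -(\nabla\phi)(-v)$. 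Everything else is a direct substitution.
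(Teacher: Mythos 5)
Your proof is correct and takes essentially the same route as the paper's: parametrize both hypersurfaces by their support functions, use $\bar p(v)=p(v)+p(-v)$, and expand $\bar x(v)=\bar p(v)v+\nabla\bar p(v)$ to recognize $x(v)-x(-v)$. Your careful derivation of the sign in $\nabla(p\circ a)(v)=-(\nabla p)(-v)$ for the antipodal map $a$ is exactly the bookkeeping the paper's one-line computation glosses over, so nothing is missing.
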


\begin{proof}
Recall if $p$ is the support function of $M$ then $\overline M$ has, by definition, support function $\bar p(v):=p(v)+p(-v)$. Therefore,
\begin{equation}\nonumber
\begin{aligned}
\bar x(v)&=\bar p(v)v+\nabla \bar p(v)\\ 
&= p(v)v+\nabla p(v) - p(-v)v-\nabla p(-v)\\
&= x(v)-x(-v),
\end{aligned}
\end{equation}
as claimed.
\proofend
\end{proof}

Now, we go back to the situation when $M\subset (\R^{2d},\omega)$ and denote all quantities associated to $\overline M$ with a bar, e.g., $\overline R$, $\overline n_\pm$, etc.

\begin{lemma}\label{lem:bar_R_and_bar_n}
Let $x\in\R^{2d}$ be in the exterior of $\overline M$. Then 
\begin{equation}\nonumber
\overline n_+(x)=n_+(x)-n_-(x) =-\overline n_-(x)
\end{equation}
and, in particular, we have the relations
\begin{equation}\nonumber
\overline R (n_+(x)-n_-(x))\sim x,\quad \overline R (n_-(x)-n_+(x))\sim -x.
\end{equation}
\end{lemma}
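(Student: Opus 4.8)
The statement to prove is Lemma~\ref{lem:bar_R_and_bar_n}: for $x$ in the exterior of $\overline M$, we have $\overline n_+(x)=n_+(x)-n_-(x)=-\overline n_-(x)$, together with the two Reeb relations. The plan is to combine Lemma~\ref{lm:symm} (which gives $\overline G^{-1}(v)=G^{-1}(v)-G^{-1}(-v)$) with the explicit formula for $n_\pm$ in terms of the Gauss map from Lemma~\ref{lem:outer_normal_Reeb_for_exterior_point}, namely $n_+(x)=G^{-1}(-Jx/|x|)$ and $n_-(x)=G^{-1}(Jx/|x|)$.

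First I would evaluate $\overline n_+(x)$ directly. By Lemma~\ref{lem:outer_normal_Reeb_for_exterior_point} applied to the hypersurface $\overline M$ (whose Gauss map is $\overline G$), we have $\overline n_+(x)=\overline G^{-1}(-Jx/|x|)$. Now apply Lemma~\ref{lm:symm} with $v=-Jx/|x|$: this yields
\begin{equation}\nonumber
\overline n_+(x)=\overline G^{-1}\!\left(-\tfrac{Jx}{|x|}\right)=G^{-1}\!\left(-\tfrac{Jx}{|x|}\right)-G^{-1}\!\left(\tfrac{Jx}{|x|}\right)=n_+(x)-n_-(x).
\end{equation}
Symmetrically, $\overline n_-(x)=\overline G^{-1}(Jx/|x|)=G^{-1}(Jx/|x|)-G^{-1}(-Jx/|x|)=n_-(x)-n_+(x)$, which is exactly $-\overline n_+(x)$; this also reflects the central symmetry of $\overline M$. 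That establishes the displayed chain of equalities.

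For the two relations involving $\overline R$, I would simply invoke the defining property of $n_\pm$ from Lemma~\ref{lem:outer_normal_Reeb_for_exterior_point} again, this time for $\overline M$: by definition $\overline R(\overline n_+(x))\sim x$ and $\overline R(\overline n_-(x))\sim -x$. Substituting the identities just proved, $\overline n_+(x)=n_+(x)-n_-(x)$ and $\overline n_-(x)=n_-(x)-n_+(x)$, gives the claimed $\overline R(n_+(x)-n_-(x))\sim x$ and $\overline R(n_-(x)-n_+(x))\sim -x$. One should note here that $n_+(x)-n_-(x)$ genuinely lies on $\overline M$ — this is precisely the content of Lemma~\ref{lm:symm}, so there is no gap.

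\textbf{Main obstacle.} There is essentially no serious obstacle; the argument is a bookkeeping exercise chaining together Lemmas~\ref{lem:outer_normal_Reeb_for_exterior_point} and~\ref{lm:symm}. The only point requiring a small amount of care is making sure that $x$ being in the exterior of $\overline M$ is the correct hypothesis for applying Lemma~\ref{lem:outer_normal_Reeb_for_exterior_point} to $\overline M$ — it is, since that lemma only needs the point to be outside the hypersurface in question — and that the normalized direction $Jx/|x|$ is the same whether computed for $M$ or for $\overline M$, which is obvious since it depends only on $x$. So the proof is short and direct.
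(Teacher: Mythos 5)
Your proof is correct and follows exactly the paper's argument: apply Lemma~\ref{lem:outer_normal_Reeb_for_exterior_point} to $\overline M$ to write $\overline n_\pm(x)=\overline G^{-1}(\mp Jx/|x|)$, expand via Lemma~\ref{lm:symm}, and then read off the Reeb relations from the defining property $\overline R(\overline n_\pm(x))\sim\pm x$. No gaps.
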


\begin{proof}
Combining Lemma \ref{lem:outer_normal_Reeb_for_exterior_point} applied to $M$ and $\overline M$ with Lemma \ref{lm:symm} we see that
\begin{equation}\nonumber
\begin{aligned}
\overline n_+(x)&=\overline G^{-1}\left(-\frac{Jx}{|x|}\right)\\
&=G^{-1}\left(-\frac{Jx}{|x|}\right)-G^{-1}\left(\frac{Jx}{|x|}\right)=-\overline n_-(x)\\
&=n_+(x)-n_-(x).
\end{aligned}
\end{equation}
The definition $\overline R (\overline n_\pm(x))\sim \pm x$ implies then the remaining relations.
\proofend 
\end{proof}

\section{The second iteration of the outer billiard map near infinity} \label{sect:thm}

Now we study the behavior of the outer symplectic billiard map for points far away from the ``outer billiard table" $M$. 

From now on we assume, for notational simplicity, that $M \subset (\R^{2d},\omega)$ bounds a strictly convex domain (the ``interior'') containing the origin $O$ in its interior and, moreover, that $M$ can be written as a level set of a smooth function with a positive definite Hessian. Denote by $ R(m)$ the Reeb vector field at the point $m\in M$. 

Due to our convexity assumption, for any point $x$ in the exterior of $M$, there exists a unique point $m_-(x) \in M$ such $m_-(x) -x\sim  R(m_-(x))$, see, e.g., \cite{Ta1} and see Figure \ref{vectors2}. In other words, there is a unique ray emanating from $x$ touching $M$ in $m_-(x)$ and pointing in the \emph{positive} characteristic direction. 

We briefly recall the proof from \cite{Ta1}. First, we observe that the map $M\times(0,\infty)\to\R^{2d}$, $(q,t)\mapsto q+tR(q)$, continuously extends to $M\times\{0\}\to M$ as the identity and to $M\times\{\infty\}\to S^{2d-1}_\infty$, the sphere at infinity / of directions, as $q\mapsto \R_{>0}R(q)$. Due to convexity, the map at infinity has mapping degree 1. If we extend this map over the interior of $M$ by the identity, we obtain a continuous map of the closed $2d$-ball which restricts to a degree-1 map on the boundary. Such a map is necessarily onto, e.g., using homological arguments.

Surjectivity means that through each point $x$ in the exterior of $M$ passes at least one positive characteristic direction ray of $M$. Uniqueness follows by arguing by contradiction. Assume that we find $q_1,q_2\in M$, $t_1,t_2>0$, such that $x=q_1+t_1R(q_1)=q_2+t_2R(q_2)$. Convexity implies that $\omega(q_2-q_1,R(q_2))=\langle q_2-q_1,n_{2}\rangle>0$ and $\omega(q_1-q_2,R(q_1))=\langle q_1-q_2,n_{1}\rangle>0$, see Figure \ref{genial}.
\begin{figure}[hbtp] 
\centering
\includegraphics[width=3.3in]{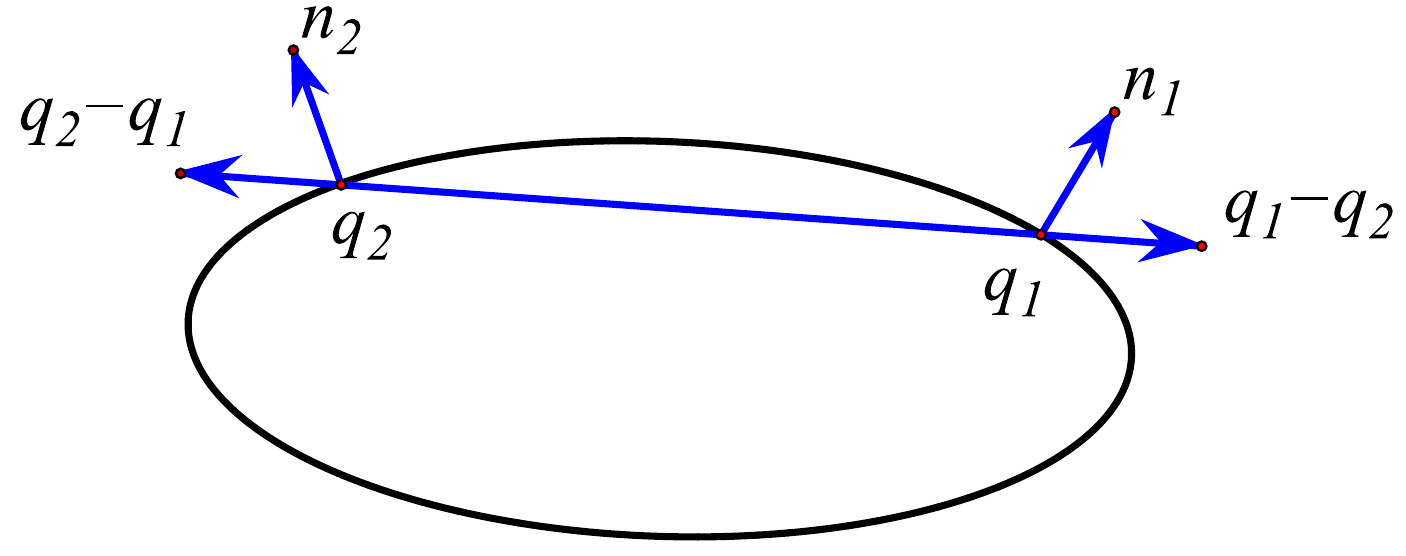}
\caption{Proving that the multiplicity of the covering by the positive characteristic rays equals 1.}
\label{genial}
\end{figure}

Combining these we arrive at
\begin{equation}\nonumber
\begin{aligned}
0&<\omega(q_1-q_2,R(q_1))=t_2\omega(R(q_2),R(q_1)),\\
0&<\omega(q_2-q_1,R(q_2))=t_1\omega(R(q_1),R(q_2)).
\end{aligned}
\end{equation}
In particular, $t_1$ and $t_2$ need to have opposite signs. This contradiction proves uniqueness.

In this new notation, the outer symplectic billiard map is given by $T(x)=2m_-(x) - x$, the reflection of $x$ in the point $m_-(x)\in M$, see Figure \ref{vectors2}.

\begin{figure}[hbtp] 
\centering
\includegraphics[width=5.3in]{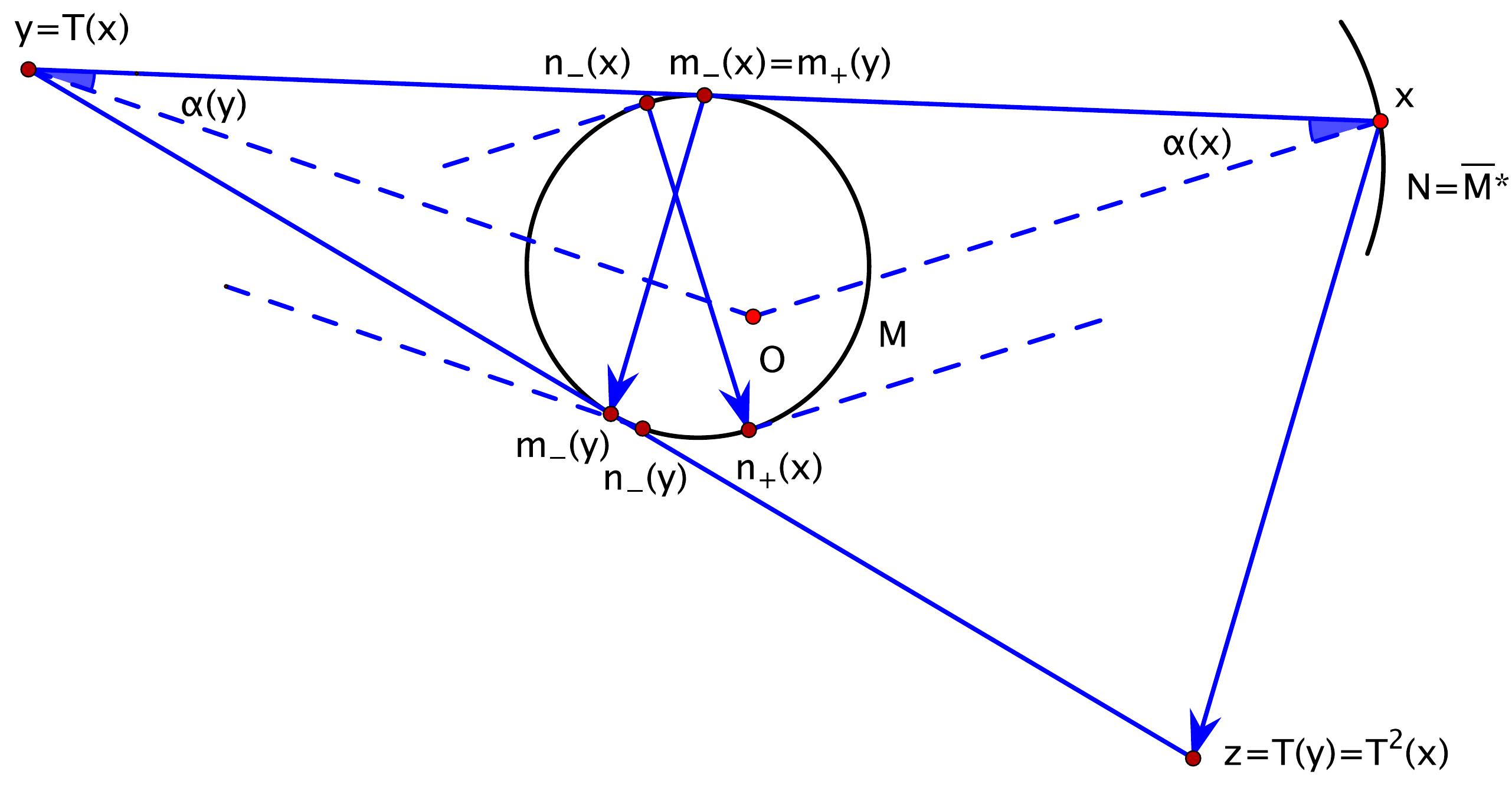}
\caption{The second iteration of the map $T$.}
\label{vectors2}
\end{figure}

There is also a unique point $m_+(x) \in M$ such that  $x-m_+(x)\sim  R(m_+(x))$, corresponding to the unique ray emanating from $x$ which is touching $M$ at $m_+(x)$ and pointing in the  \emph{negative} characteristic direction.

We already discussed above that for any point $x$ in the exterior of $M$, there exist a unique point $n_-(x)\in M$, resp.~$n_+(x) \in M$, such that $ R(n_-(x)) \sim -x$, resp.~$ R(n_+(x)) \sim x$, see again Figure \ref{vectors2}.

Recalling that $T(x)=2m_-(x) - x$ and noticing that $m_-(x)=m_+(y)$ for $y=T(x)$ we obtain
\begin{equation}\nonumber
T^2(x)-x = 2(m_-(y)-m_-(x)),
\end{equation}
or, more symmetrically,
\begin{equation}\nonumber
T(y)-T^{-1}(y) = 2(m_-(y)-m_+(y)).
\end{equation}
Now, let $\overline M=M\ominus M$ be the central symmetrization of $M$ and $N:=(\overline M)^*$ be its symplectic polar dual. 

\begin{proposition} \label{prop:Ham}
Let $H$ be the homogeneous function of degree one whose unit level hypersurface is $N$. Then for any $x$ in the exterior of $\overline M$,
$$
V(x) :=2(n_+(x)-n_-(x))= -2 X_H (x).
$$
In particular, $V$ is homogeneous of degree zero.
\end{proposition}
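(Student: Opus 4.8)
The plan is to identify $V(x)=2(n_+(x)-n_-(x))$ with $-2X_H(x)$ by showing both sides point in the same direction and have the same ``length'' as measured by the symplectic pairing against $x$. Recall from Lemma~\ref{lem:bar_R_and_bar_n} that $\overline n_+(x)=n_+(x)-n_-(x)$ and that $\overline R(\overline n_+(x))\sim x$. So if I set $a:=\overline n_+(x)=n_+(x)-n_-(x)\in\overline M=\{H^*=1\}$ where $H^*$ is the $1$-homogeneous function whose unit level set is $\overline M$, then $\overline R(a)\sim x$. The claim $V(x)=-2X_H(x)$ should follow from the relationship between the Reeb vector field of $\overline M$, the point of $\overline M$ it is evaluated at, and the Hamiltonian vector field of $H$ on $N=(\overline M)^*$.

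First I would pin down the direction. Since $N=(\overline M)^*$ is the symplectic polar of $\overline M$, and the map $\overline R:\overline M\to N$ is (up to the sign issue noted in the excerpt) the defining map of the symplectic polar, the point $a\in\overline M$ with $\overline R(a)\sim x$ is, by construction, the point whose image under $\overline R$ lies on the ray through $x$; more precisely $\overline R(a)=x/H(x)\in N$ by $1$-homogeneity of the level-set description, arguing exactly as in the proof of Lemma~\ref{lem:duality_for_n_plus}. Now apply the ``almost-duality'' lemma $R^*\circ R=-\mathrm{id}$ in the version for $\overline M$: the Reeb vector field $R^N$ of $N$ satisfies $R^N(\overline R(a))=-a$, i.e. $R^N(x/H(x))=-a=-(n_+(x)-n_-(x))$. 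By Lemma~\ref{lm:sgr} applied to $H$ (whose unit level set is $N$), the Reeb vector field of $N$ at a point $y\in N$ equals $X_H(y)$, and by $0$-homogeneity of $X_H$ (differentiating a $1$-homogeneous function gives a $0$-homogeneous vector field), $X_H(x/H(x))=X_H(x)$. Stringing these together: $X_H(x)=R^N(x/H(x))=-(n_+(x)-n_-(x))$, hence $V(x)=2(n_+(x)-n_-(x))=-2X_H(x)$, and $0$-homogeneity of $V$ is immediate from that of $X_H$.

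The one genuine subtlety, which I would treat carefully, is the bookkeeping of signs and the domain on which $\overline R$ is a bijection onto $N$: the excerpt emphasizes that forming the symplectic polar is ``not quite a duality'' because $R^*\circ R=-\mathrm{id}$, so I must make sure that $a=n_+(x)-n_-(x)$ really is the point of $\overline M$ whose Reeb image is the \emph{positive} multiple $x/H(x)$ of $x$ lying on $N$, and not $-x/H(x)$; this is exactly what the relation $\overline R(n_+(x)-n_-(x))\sim x$ from Lemma~\ref{lem:bar_R_and_bar_n} guarantees, with the correct (positive) sign. I would also note that $N$ is indeed a smooth quadratically convex hypersurface bounding a domain with $O$ in its interior, so that $H$ exists and is smooth away from $O$, and all the earlier lemmas (in particular the existence and uniqueness statements, and $R^*\circ R=-\mathrm{id}$) apply to the pair $(\overline M, N)$. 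Everything else is a direct substitution, so I expect no further obstacle beyond this sign-chasing.
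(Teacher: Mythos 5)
Your proposal is correct and follows essentially the same route as the paper: Lemma~\ref{lem:bar_R_and_bar_n} to identify $\overline n_+(x)=n_+(x)-n_-(x)$ with $\overline R(\overline n_+(x))\sim x$, the relation $R^*\circ R=-\mathrm{id}$ to pass to the Reeb field of $N$ at $x/H(x)$, Lemma~\ref{lm:sgr} to identify that Reeb field with $X_H$, and $0$-homogeneity of $X_H$ to conclude. The only cosmetic difference is that you inline the argument of Lemma~\ref{lem:duality_for_n_plus} (applied to the pair $(\overline M,N)$) where the paper cites it directly.
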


\begin{proof}
Recall that above we associated with the hypersurface $M$ the quantities $R$ and $n_\pm$ and with $\overline M$ the quantities $\overline R$ and $\overline n_\pm$. Note that a point $x$ in the exterior of $\overline M$ is also in the exterior of $M$. Moreover, Lemma \ref{lem:bar_R_and_bar_n} asserts 
\begin{equation}\nonumber
\overline n_+(x)=n_+(x)-n_-(x) \in \overline M
\end{equation} 
and 
\begin{equation}\nonumber
\overline R (n_+(x)-n_-(x))\sim x.
\end{equation}
See also Figure \ref{dual} for a geometric interpretation.
\begin{figure}[hbtp] 
\centering
\includegraphics[width=2.5in]{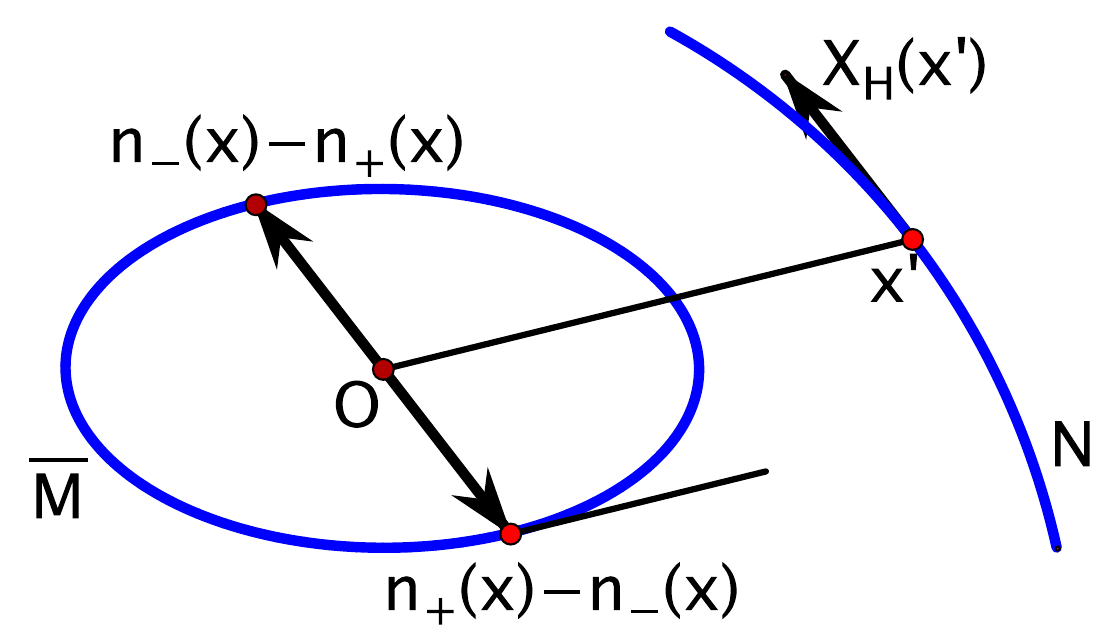}
\caption{Illustrating Proposition \ref{prop:Ham}.}
\label{dual}
\end{figure}
Now we apply Lemma \ref{lem:duality_for_n_plus} to the point $x':=\frac{x}{H(x)} \in N=\{H=1\}=(\overline M)^*$ and combine it with Lemma \ref{lm:sgr} to conclude that
\begin{equation}\nonumber
-\overline n_+(x)=X_H(x').
\end{equation}
Finally, since $H$ is homogeneous of degree one, its Hamiltonian field $X_H$ is homogeneous of degree zero, hence 
\begin{equation}\nonumber
X_H(x)=X_H(x').
\end{equation}
Putting everything together, we see that
\begin{equation}\nonumber
\begin{aligned}
X_H(x)=X_H(x')=-\overline n_+(x)=n_-(x)-n_+(x),
\end{aligned}
\end{equation}
as claimed.
\proofend
\end{proof}

Next we estimate  how close the vectors $T^2(x)-x=2(m_-(y)-m_-(x))$ and $V(x)=2(n_+(x)-n_-(x))$ are depending on the distance $|x|$ of $x$ from the origin. 

\begin{lemma} \label{lm:eps}
There exist constants $C,\delta>0$, depending only on $M$, such that for $|x| \ge \frac{1}{\delta}$ one has
\begin{equation}\nonumber
\begin{aligned}
(1)&& \quad |n_-(x)-m_-(x)| &\leq \frac{C}{|x|},\\[1ex]
(2)&& |m_-(y)-n_-(y)| &\leq \frac{C}{|x|},\\[1ex]
(3)&& |n_-(y)-n_+(x)| &\leq \frac{C}{|x|}.
\end{aligned}
\end{equation}
Here, as above, $y=T(x)$.
\end{lemma}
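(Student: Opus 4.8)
The plan is to establish all three estimates by a single mechanism: each pair of points in question is a pair of tangency points on $M$ whose outward unit normals are nearly antipodal, and the discrepancy between ``nearly antipodal'' and ``exactly antipodal'' is controlled by the geometry of $x$ and $y=T(x)$ far from the origin. First I would set up the comparison for (1). By definition, $n_-(x)=G^{-1}(Jx/|x|)$, so its outward normal is $Jx/|x|$; on the other hand $m_-(x)$ is the tangency point of the positive characteristic ray from $x$, whose normal $\nu(m_-(x))$ satisfies $\langle x-m_-(x),\nu(m_-(x))\rangle=0$, i.e. the normal at $m_-(x)$ is perpendicular to the line $xm_-(x)$, which points in the direction $R(m_-(x))=J\nabla f(m_-(x))$, so $\nu(m_-(x))$ is obtained by rotating the direction of $x-m_-(x)$ by $J$. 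Since $|m_-(x)|$ is bounded (it lies on the compact set $M$), the direction of $x-m_-(x)$ differs from the direction of $x$ by an angle $O(1/|x|)$; applying $J$ (an isometry) this shows $\nu(m_-(x))$ differs from $Jx/|x|$ by $O(1/|x|)$. Then, because $M$ has a positive-definite Hessian, $G^{-1}$ is Lipschitz, so $|n_-(x)-m_-(x)|=|G^{-1}(\nu(m_-(x)))-G^{-1}(Jx/|x|)|\le C/|x|$.

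For (2) the same argument applies with the roles reversed: $m_-(y)=m_+(x)$ is the tangency point of the negative characteristic ray from $x$, but the cleaner route is to use $m_-(y)$ directly — its normal is $J$ applied to the direction of $y-m_-(y)$, and we need to compare with $n_-(y)$, whose normal is $Jy/|y|$. Here the one subtlety is that the estimate is stated in terms of $|x|$, not $|y|$, so I would first record that $|y|=|T(x)|=|2m_-(x)-x|\ge|x|-2\,\mathrm{diam}(M)$, hence for $|x|$ large (i.e. $|x|\ge1/\delta$ with $\delta$ small depending on $M$) we have $|y|\ge|x|/2$ and $1/|y|\le 2/|x|$; after that the argument of (1) applied at $y$ gives $|m_-(y)-n_-(y)|\le C'/|y|\le 2C'/|x|$.

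For (3) I would exploit $n_+(x)=G^{-1}(-Jx/|x|)$ and $n_-(y)=G^{-1}(Jy/|y|)$, so it suffices to bound the angle between $-Jx/|x|$ and $Jy/|y|$, equivalently between $-x/|x|$ and $y/|y|$, equivalently between $-x$ and $y$. Writing $y=2m_-(x)-x$, we have $y-(-x)=2m_-(x)$, a vector of bounded length; combined with $|x|$ and $|y|$ both being $\gtrsim|x|$, this forces the directions of $-x$ and $y$ to agree up to $O(1/|x|)$, and Lipschitz continuity of $G^{-1}$ finishes it. The main obstacle, and the step needing the most care, is the quantitative Lipschitz control of $G^{-1}$ together with the uniformity of all constants: this is exactly where the positive-definite-Hessian hypothesis enters (it bounds the principal curvatures of $M$ away from $0$ and $\infty$, so $G$ is a bi-Lipschitz diffeomorphism), and one must be careful that $\delta$ is chosen small enough — depending only on $M$ through its diameter and curvature bounds — that for $|x|\ge1/\delta$ all the relevant points $m_\pm$, $n_\pm$ stay in the regime where these uniform bounds hold and that the ``$O(1/|x|)$'' angle estimates are genuinely below the injectivity threshold of $G$. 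Once the bi-Lipschitz constant $L$ of $G^{-1}$ and the diameter $D=\mathrm{diam}(M)$ are fixed, $C$ can be taken to be an explicit multiple of $LD$.
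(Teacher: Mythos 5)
Your argument is correct and is essentially the paper's own proof: both reduce each estimate to a bound on the angle between the relevant outward normals (obtained from the elementary geometry of the triangles $Oxm_-(x)$ and $Oxy$ together with the boundedness of $M$ and the estimate $|y|\ge|x|-2\,\mathrm{diam}(M)$), and then transfer the angle bound back to a distance on $M$ via the bi-Lipschitz property of the Gauss map. The only cosmetic difference is in (3), where you compare the directions of $-x$ and $y$ directly via $x+y=2m_-(x)$ instead of the paper's angle-sum identity $\gamma=\pi-\angle xOy=\alpha(x)+\alpha(y)$.
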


\begin{proof} We continue to refer to Figure \ref{vectors2} and the notation from above. We start with the following estimate for a point $x$ in the exterior of $M$. Recall that by definition of the reflection rule, we have 
\begin{equation}\nonumber
|y-m_-(x)|=|x-m_-(x)|
\end{equation}
for $y=T(x)$. Combining this with the triangle inequality, we see that
\begin{equation}\nonumber
\begin{aligned}
|x|&\leq |x-m_-(x)|+|m_-(x)|\\
&=|y-m_-(x)|+|m_-(x)|\\
&\leq |y|+2|m_-(x)|.
\end{aligned}
\end{equation}
If we set $C_1=\mathrm{diam}(M)$, then we arrive at 
\begin{equation}\label{eq:estxy_one_side}
|y|\geq |x|-2C_1.
\end{equation}
For later purposes we note that, by symmetry, the inequality $|x|\geq |y|-2C_1$ holds as well and, in combination,
\begin{equation*}\label{eq:estxy}
||x|-|y||\leq 2C_1.
\end{equation*}
Next, we consider the angle between the unit normals to $M=\{f=1\}$ at the points $n_-(x)$ and $m_-(x)$ and denote it by $\alpha(x)$, i.e.,
\begin{equation}\nonumber
\cos \alpha(x)=\left\langle\frac{\nabla f(n_-(x))}{|\nabla f(n_-(x))|},\frac{\nabla f(m_-(x))}{|\nabla f(m_-(x))|}\right\rangle,
\end{equation}
and likewise for the angle $\alpha(y)$ between the normals at the points $n_-(y)$ and $m_-(y)$. 

Recall that $J\nabla f(n_-(x))\sim -x$ and $J\nabla f(m_-(x))\sim m_-(x)-x$. Since $J$ is an isometry, we conclude that $\alpha(x)$ is also the angle between $x$ and $x-m_-(x)$, that is, 
$$
\alpha(x)=\angle m_-(x)xO.
$$
Now, we choose $\delta$ such that $|x|\geq \frac{1}{\delta}$ implies that 
$$
2\sin\alpha\geq\alpha
$$ 
for both angles $\alpha=\alpha(x)$ and $\alpha=\alpha(y)$. The inequality above holds for $\alpha \in [0, \pi/2]$, and is therefore clearly satisfied for $x$ far enough (see Figure \ref{vectors2}), and thus also for $y=T(x)$, due to  inequality \eqref{eq:estxy_one_side}.

Setting $\beta(x) = \angle O m_-(x) x$, the sine rule in the triangle $\triangle Oxm_-(x)$ implies 
$$
\sin \alpha(x) = \frac{\ |m_-(x)|}{|x|}\sin \beta(x).
$$
Recalling that $C_1=\mathrm{diam}(M)$, we can estimate
$$
\sin \alpha(x) \leq \frac{C_1}{|x|}.
$$
Now, since $|x|\geq \frac{1}{\delta}$, we have $2\sin\alpha(x)\geq\alpha(x)$, and thus
\begin{equation} \label{eq:estal}
\alpha(x) \leq  \frac{2C_1}{|x|}.
\end{equation}
In terms of the Gauss map $G:M\to S^{2d-1}$, inequality \eqref{eq:estal} asserts that the points $G(n_-(x))\in S^{2d-1}$ and $G(m_-(x))\in S^{2d-1}$ have (spherical) distance at most $\frac{2C_1}{|x|}$. Since $G$ is a diffeomorphism of compact manifolds, it is bi-Lipschitz. In particular, there exists a constant $\ell>0$ with 
\begin{equation}\nonumber
\ell |q-p|\leq \angle G(q)G(p).
\end{equation}
We conclude that
\begin{equation}\nonumber
\ell |n_-(x)-m_-(x)|\leq \angle G(n_-(x))G(m_-(x))=\alpha(x) \leq  \frac{2C_1}{|x|}
\end{equation}
that is
\begin{equation}\nonumber
|n_-(x)-m_-(x)|\leq  \frac{2C_1}{\ell|x|}.
\end{equation}
Therefore, we proved Claim (1) with the constant $\frac{2C_1}{\ell}$ and, in fact, also Claim (2), since we can repeat the argument for $\alpha(y)$ as it also satisfies $2\sin\alpha(y)\geq\alpha(y)$. Using inequality (\ref{eq:estxy_one_side}) and assuming $\frac{1}{\delta} \geq 2C_1$, we obtain: 
\begin{equation}\nonumber
|n_-(y)-m_-(y)|\leq \frac{2C_1}{\ell|y|}\leq \frac{4C_1}{\ell|x|}.
\end{equation}
Next, let us denote by $\gamma$ the angle between the normals to $M$ at the points $n_-(y)$ and $n_+(x)$, i.e.,
\begin{equation}\nonumber
\cos\gamma=\left\langle\frac{\nabla f(n_-(y))}{|\nabla f(n_-(y))|},\frac{\nabla f(n_+(x))}{|\nabla f(n_+(x))|}\right\rangle.
\end{equation}
Again, $J\nabla f(n_+(x))\sim x$ and $J\nabla f(n_-(y))\sim -y$ imply
\begin{equation}\nonumber
\begin{aligned}
\gamma &= \pi - \angle xOy\\
&= \angle Oxy + \angle Oyx\\
&=\alpha(x)+\alpha(y)\\
&\leq \frac{6C_1}{|x|},
\end{aligned}
\end{equation}
where we applied \eqref{eq:estal} to $\alpha(x)$ and $\alpha(y)$. This proves Claim (3) with constant $6C_1/\ell$. Setting
\begin{equation}\nonumber
C:=\frac{6C_1}{\ell}
\end{equation} 
proves the Lemma. 
\proofend
\end{proof}

To prove Theorem \ref{thm:shadow} below, we need a uniform estimate for the vector field $V(x)=2(n_+(x)-n_-(x))$ and its flow, which we denote by $\varphi_t$.

\begin{lemma}\label{lm:taylor}
There exist constants $\tilde C, \tilde\delta> 0$ such that for all $x$ with $|x|\geq 1/\tilde\delta$ the estimate
\begin{equation}\nonumber
|\varphi_1(x)-x-V(x)|\leq \frac{\tilde C}{|x|}
\end{equation}
holds.
\end{lemma}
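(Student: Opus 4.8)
The plan is to derive this from a standard Gronwall/Taylor estimate for the flow of a vector field, combined with quantitative control on the size and Lipschitz constant of $V$ near infinity. Recall that by Proposition \ref{prop:Ham}, $V = -2X_H$ is homogeneous of degree zero, so $|V(x)|$ is bounded by a constant $C_2 = 2\,\mathrm{diam}(\overline M)$ for all $x$ outside $\overline M$ (indeed $V(x) = 2(\overline n_+(x) - \overline n_-(x))$ is a difference of two points of $\overline M$). Moreover, since $H$ is smooth and $1$-homogeneous away from the origin, $X_H$ is smooth and $0$-homogeneous there, so its derivative $DX_H$ is $(-1)$-homogeneous; hence on the region $|x| \ge 1/\tilde\delta$ there is a constant $C_3$ with $\|DV(x)\| \le C_3/|x|$. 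In particular $V$ is globally Lipschitz on each such region, so its flow $\varphi_t$ is well-defined there (shrinking $\tilde\delta$ if necessary so that a trajectory starting at $|x|\ge 1/\tilde\delta$ stays, say, in $|z| \ge 1/(2\tilde\delta)$ for $t\in[0,1]$; this follows because $|\varphi_t(x)-x|\le \int_0^t|V(\varphi_s(x))|\,ds \le C_2$, so one only needs $1/\tilde\delta \ge 2C_2$).

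The main step is then the elementary estimate: along the integral curve $\gamma(t) = \varphi_t(x)$ with $\gamma(0)=x$, write
\begin{equation}\nonumber
\varphi_1(x) - x - V(x) = \int_0^1 \bigl(V(\gamma(t)) - V(\gamma(0))\bigr)\,dt = \int_0^1 \int_0^t DV(\gamma(s))\,\dot\gamma(s)\,ds\,dt.
\end{equation}
Using $|\dot\gamma(s)| = |V(\gamma(s))| \le C_2$ and $\|DV(\gamma(s))\| \le C_3/|\gamma(s)| \le 2\tilde\delta C_3 \le C_3'/|x|$ on our region (using $|\gamma(s)| \ge |x| - C_2 \ge |x|/2$ when $|x|\ge 2C_2$), the double integral is bounded by $\tfrac12 C_2 C_3' / |x|$. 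Setting $\tilde C := \tfrac12 C_2 C_3'$ gives the claim.

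The only subtlety — and the thing worth stating carefully — is the homogeneity bookkeeping that turns "$V$ is smooth and bounded near infinity" into the decaying bound $\|DV(x)\| \le C_3/|x|$; this is what produces the extra factor of $1/|x|$ beyond the trivial $O(1)$ bound on $\varphi_1(x)-x-V(x)$ coming from boundedness of $V$ alone. Concretely: $0$-homogeneity of $V$ means $V(\lambda x) = V(x)$ for $\lambda>0$; differentiating in $x$ gives $DV(\lambda x)\,\lambda = DV(x)$, i.e. $DV(\lambda x) = \lambda^{-1} DV(x)$. Taking $\lambda = |x|$ and $C_3 := \sup_{|u|=1}\|DV(u)\|$ (finite by smoothness of $V$ on the unit sphere, which holds since $N$ is a smooth convex hypersurface so $H$ is smooth away from $0$) yields $\|DV(x)\| = C_3/|x|$. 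Everything else is the routine fundamental-theorem-of-calculus manipulation above, so I do not anticipate a genuine obstacle — just care in choosing $\tilde\delta$ small enough that all the "$|x|\ge 2C_2$" type conditions hold simultaneously and the flow stays in the good region for all $t\in[0,1]$.
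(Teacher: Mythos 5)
Your proof is correct, and it reaches the stated estimate by a genuinely different mechanism than the paper, though both arguments ultimately rest on the $0$-homogeneity of $V$. The paper's proof is a scaling argument: it first records the uniform second-order Taylor expansion $\varphi_t(q)=q+tV(q)+r(t)$ with $|r(t)|\le \tilde C t^2$ for $q$ in the compact unit ball and $|t|\le\tilde\delta$ (this needs only smoothness of the flow on a compact set, with no explicit bound on $DV$), and then transports the estimate to infinity via the conjugation identity $\varphi_{ct}\circ S_c=S_c\circ\varphi_t$ for the homothety $S_c(x)=cx$, choosing $c=1/|x|$ so that the $t^2$ remainder rescales to $\tilde C/|x|$. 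You instead stay at the original scale and make the decay quantitative at the level of the derivative: differentiating $V(\lambda x)=V(x)$ gives $\|DV(x)\|\le C_3/|x|$, and the double-integral (fundamental theorem of calculus) form of the flow's Taylor remainder, combined with the uniform bound $|\dot\gamma|=|V\circ\gamma|\le C_2$, yields the $1/|x|$ bound directly. Both routes are sound; the paper's is softer and shorter once the homogeneity of the flow is observed, while yours makes explicit where the extra factor of $1/|x|$ comes from and produces concrete constants $\tilde C=\tfrac12 C_2C_3'$, at the modest cost of having to check that the trajectory stays in the region $|\gamma(s)|\ge|x|/2$ where the derivative bound applies — which you do correctly by taking $1/\tilde\delta\ge 2C_2$.
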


\begin{proof}
We first claim that there exist constants $\tilde C\geq 0$ and $\tilde\delta>0$ such that for all $q$ with $|q|\leq 1$ and $|t|\leq\tilde\delta$ we have 
\begin{equation}\nonumber
\varphi_t(q)=q+V(q)t+r(t)\quad\text{and}\quad |r(t)|\leq\tilde Ct^2.
\end{equation}
This is simply the Taylor expansion of the smooth map $t\mapsto\varphi_t(q)$, uniformly on the compact 1-ball. 

Now, let $S_c(x)=cx$ be the homothety with coefficient $c$. Since the vector field $V$ is homogeneous of degree zero, i.e., $V(x)=V(cx)$ (see Proposition \ref{prop:Ham}), its flow $\varphi_t$ satisfies
$$
\varphi_{ct} \circ S_c = S_c \circ \varphi_t \quad\text{and, in particular,}\quad \varphi_1(x)=\frac1c\varphi_c(cx).
$$
For $x$ with $|x|\geq 1/\tilde\delta$ we set $c:=\frac{1}{|x|}$ and $q:=cx$. Then we may apply the above Taylor expansion with $t=c$ to conclude
\begin{equation}\nonumber
\begin{aligned}
\left|\varphi_1(x)-x-V(x)\right|&=\left|\frac1c\big(\varphi_{c}(cx)-cx\big)-V(cx)\right|\\
&=\left|\frac1c\big(\varphi_{c}(q)-q\big)-V(q)\right|\\
&=\left|\frac{r(c)}{c}\right|\leq \tilde Cc=\frac{\tilde C}{|x|},
\end{aligned}
\end{equation}
as claimed.
\proofend
\end{proof}

Now, we are in the position to prove our first main theorem that compares the second iteration $T^2$ of the outer billiard map with the time-1 map of the vector field $V$. Set $F=\varphi_1$.

%
%

\begin{theorem} \label{thm:shadow}
Recall the constants $C, \tilde C, \delta, \tilde\delta$ from Lemmas \ref{lm:eps} and \ref{lm:taylor}. Then, for all $|x| \ge \frac{1}{\Delta}:=\max\{\frac{1}{\delta},\frac{1}{\tilde\delta}\}$, one has
\begin{equation*} \label{eq:est1}
|T^2(x) - F(x)| \le \frac{6C+\tilde C}{|x|}.
\end{equation*}
\end{theorem}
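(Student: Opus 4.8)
The strategy is a direct triangle-inequality estimate, decomposing the displacement $T^2(x)-F(x)$ into the four ingredients controlled by Lemmas \ref{lm:eps} and \ref{lm:taylor}. Recall that $T^2(x)-x = 2(m_-(y)-m_-(x))$ with $y=T(x)$, that $V(x)=2(n_+(x)-n_-(x))$, and that $F(x)=\varphi_1(x)$. I would first write
\begin{equation}\nonumber
T^2(x)-x - V(x) = 2\big(m_-(y)-n_-(y)\big) + 2\big(n_-(y)-n_+(x)\big) + 2\big(n_-(x)-m_-(x)\big),
\end{equation}
which is an exact algebraic identity after expanding: the terms telescope to $2(m_-(y)-m_-(x)) - 2(n_+(x)-n_-(x))$. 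Each of the three parenthesized differences is bounded by $C/|x|$ by parts (2), (3), (1) of Lemma \ref{lm:eps} respectively, valid once $|x|\ge 1/\delta$. Hence $|T^2(x)-x-V(x)| \le 6C/|x|$.

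\textbf{Main step.} Then I would combine this with Lemma \ref{lm:taylor}, which gives $|F(x)-x-V(x)| = |\varphi_1(x)-x-V(x)| \le \tilde C/|x|$ whenever $|x|\ge 1/\tilde\delta$. A final triangle inequality,
\begin{equation}\nonumber
|T^2(x)-F(x)| \le |T^2(x)-x-V(x)| + |F(x)-x-V(x)| \le \frac{6C}{|x|} + \frac{\tilde C}{|x|} = \frac{6C+\tilde C}{|x|},
\end{equation}
valid for $|x|\ge 1/\Delta = \max\{1/\delta, 1/\tilde\delta\}$, completes the argument.

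\textbf{Expected obstacle.} There is essentially no genuine obstacle left at this point: all the analytic work has been front-loaded into Lemmas \ref{lm:eps} and \ref{lm:taylor}. The only thing to be careful about is the bookkeeping in the telescoping identity — making sure the signs and the factor of $2$ line up so that the intermediate points $n_-(y)$, $n_+(x)$, $n_-(x)$ cancel correctly, and that each of the three chunks is precisely one of the three estimates in Lemma \ref{lm:eps} (note in particular that part (2) of that lemma was already stated with $|x|$, not $|y|$, in the denominator, so no extra conversion is needed). One should also verify that the domain restriction $|x|\ge 1/\Delta$ simultaneously activates both lemmas, which is exactly why $\Delta$ is defined as the smaller of the two thresholds. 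The rest is the two-line triangle-inequality chain above.
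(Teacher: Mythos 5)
Your proposal is correct and follows exactly the paper's argument: the same telescoping decomposition of $T^2(x)-x-V(x)$ into the three differences controlled by Lemma \ref{lm:eps}, yielding $6C/|x|$, combined with Lemma \ref{lm:taylor} and a final triangle inequality. Nothing is missing.
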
 

\begin{proof}
We recall that 
$$
V(x)=2(n_+(x)-n_-(x)),\ \  T^2(x)-x = 2(m_-(y)-m_-(x))
$$
and assume that $|x| \ge \frac{1}{\Delta}=\max\{\frac{1}{\delta},\frac{1}{\tilde\delta}\}$. Since $|x|\geq\frac{1}{\delta}$ we may apply Lemma \ref{lm:eps} and, together with the triangle inequality, it follows that
$$
|(m_-(y)-m_-(x)) - (n_+(x)-n_-(x))| \leq\frac{3C}{|x|}.
$$
In particular,
\begin{equation} \label{eq:TV}
|T^2(x)-x-V(x)| \leq\frac{6C}{|x|}
\end{equation}
holds. Moreover, Lemma \ref{lm:taylor} asserts for $|x|\geq \frac{1}{\tilde\delta}$ that 
\begin{equation}\nonumber
|F(x)-x-V(x)|\leq \frac{\tilde C}{|x|},
\end{equation}
where we recall the abbreviation $F(x)=\varphi_1(x)$. The triangle inequality implies that
\begin{equation}\nonumber
|T^2(x) - F(x)|\leq \frac{(6C+\tilde C)}{|x|}
\end{equation}
for all $|x|\geq \frac{1}{\Delta}$.
\proofend
\end{proof}

Next, we estimate the rate at which the orbits of the outer symplectic billiard map may escape to infinity. For that, recall that 
$$
N=(\overline M)^*=\{H=1\}
$$
and that $H$ is 1-homogeneous. In particular,  the Hamiltonian function $H$ defines a norm in $\R^{2d}$. Thus, $H$ is equivalent to the Euclidean norm, that is, there exists a constant $\mu>0$ such that, for all $x$,
 $$
\frac1\mu |x| \leq H(x) \leq \mu |x|.
$$
This makes it possible to replace the estimates involving $|x|$ with those involving $H(x)$ by changing the relevant constants. For notational convenience, let us write
\begin{equation}\nonumber
|x|_H:=H(x).
\end{equation}
Informally speaking, the next theorem states that the escape rate of an orbit to infinity is at most of order $\sqrt{k}$ where $k$ is the number of iterations. In the proof, we need the elementary Lemma \ref{micro_lemma}, which we prove at the end of this section.

\begin{theorem} \label{thm:esc}
There is a constant $\bar C$ such that for $|x|_H \ge \frac{\mu}{\Delta}$ and all $k$ one has
$$
\big||T^{2k}(x)|_H^2 - |x|_H^2\big| \leq \bar C k.
$$
Here $\Delta$ is the constant from Theorem \ref{thm:shadow} and $\bar C$ is given explicitly by
\begin{equation}\nonumber
\bar C=2\mu^2(6C+\tilde C)+{3\mu^2(6C+\tilde C)^2\Delta^2}.
\end{equation}
\end{theorem}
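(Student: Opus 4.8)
The plan is to compare $T^2$ with the time-$1$ map $F=\varphi_1$ of the vector field $V=-2X_H$ one step at a time and then telescope. The first observation is that $F$ preserves $H$: since, by Proposition~\ref{prop:Ham}, $V=-2X_H$ is a constant multiple of the Hamiltonian vector field of $H$, we have $dH(V)=-2\,\omega(X_H,X_H)=0$, so the function $H$, and hence the norm $|\cdot|_H$, is constant along the flow $\varphi_t$. In particular $|F(z)|_H=|z|_H$ for every $z\neq 0$, so $F$ itself contributes nothing to the change of $|\cdot|_H^2$, and all of $|T^{2k}(x)|_H^2-|x|_H^2$ comes from the shadowing error of Theorem~\ref{thm:shadow}.

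Write $x_j:=T^{2j}(x)$, so that $x_{j+1}=T^2(x_j)$, and split
\[
|T^{2k}(x)|_H^2-|x|_H^2=\sum_{j=0}^{k-1}\bigl(|x_{j+1}|_H^2-|x_j|_H^2\bigr).
\]
I would bound a single increment under the hypothesis $|x_j|_H\ge\mu/\Delta$; via $|z|_H\le\mu|z|$ this forces $|x_j|\ge|x_j|_H/\mu\ge 1/\Delta$, so Theorem~\ref{thm:shadow} applies at $x_j$. Using $|F(x_j)|_H=|x_j|_H$, the reverse triangle inequality for the norm $|\cdot|_H$, the norm equivalence, and then Theorem~\ref{thm:shadow},
\[
\bigl|\,|x_{j+1}|_H-|x_j|_H\,\bigr|=\bigl|\,|T^2(x_j)|_H-|F(x_j)|_H\,\bigr|\le|T^2(x_j)-F(x_j)|_H\le\mu\,|T^2(x_j)-F(x_j)|\le\frac{\mu(6C+\tilde C)}{|x_j|}\le\frac{\mu^2(6C+\tilde C)}{|x_j|_H}.
\]
Multiplying by $|x_{j+1}|_H+|x_j|_H\le 2|x_j|_H+\mu^2(6C+\tilde C)/|x_j|_H$ and using $|x_j|_H\ge\mu/\Delta$ to bound the reciprocal terms gives $\bigl|\,|x_{j+1}|_H^2-|x_j|_H^2\,\bigr|\le 2\mu^2(6C+\tilde C)+\mu^2(6C+\tilde C)^2\Delta^2$, which accounts for the shape of the two summands in $\bar C$.

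The only genuinely delicate point is that an orbit starting with $|x|_H\ge\mu/\Delta$ may subsequently produce a point $x_j$ with $|x_j|_H<\mu/\Delta$, where the step estimate above is unavailable since Theorem~\ref{thm:shadow} controls $T^2$ only far from the origin. To close the argument I would apply the elementary Micro-Lemma~\ref{micro_lemma} to the non-negative sequence $a_j:=|x_j|_H^2$ with threshold $b:=(\mu/\Delta)^2$: this reduces the claim to the one-step bound just derived (valid whenever $a_j\ge b$) together with a crude control of a threshold-crossing step, namely that $T^2$ cannot send a point with $|x_j|_H<\mu/\Delta$ to one whose $|\cdot|_H^2$ exceeds $b$ by more than a controlled amount; the latter follows from the same one-step estimate (still available as long as $|x_j|\ge 1/\Delta$) together with the trivial global bound $|T^2(z)-z|\le 4\,\mathrm{diam}(M)$. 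The extra slack $2\mu^2(6C+\tilde C)^2\Delta^2$ built into $\bar C$ beyond the naive per-step constant is precisely what absorbs these boundary steps in the induction of the Micro-Lemma. \emph{The main obstacle} is therefore not any single estimate but this bookkeeping on the shell $|x|_H\approx\mu/\Delta$, where the shadowing of Theorem~\ref{thm:shadow} degrades; everything else is the invariance of $H$ under $F$, the reverse triangle inequality, and summation.
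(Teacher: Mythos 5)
Your proof is essentially the paper's: both arguments rest on the invariance of $H$ under $F=\varphi_1$, the per-step comparison $|T^2(x_j)-F(x_j)|_H\le \mu^2(6C+\tilde C)/|x_j|_H$ from Theorem~\ref{thm:shadow}, and telescoping over $k$ steps. The only cosmetic difference is how the one-step bound on $\bigl||x_{j+1}|_H^2-|x_j|_H^2\bigr|$ is extracted: you factor $a^2-b^2=(a-b)(a+b)$ and use the reverse triangle inequality, while the paper packages the same computation into Lemma~\ref{micro_lemma}; your version even yields the slightly better constant $2\mu^2(6C+\tilde C)+\mu^2(6C+\tilde C)^2\Delta^2\le\bar C$. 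One correction to your last paragraph: Lemma~\ref{micro_lemma} is purely a two-vector statement ($|a-b|\le C/|b|$ implies $\bigl||a|^2-|b|^2\bigr|\le 2C+3C^2\Lambda^2$); it says nothing about non-negative sequences with a threshold, so it cannot be ``applied to the sequence $a_j=|x_j|_H^2$'' to absorb threshold-crossing steps, and the extra $2\mu^2(6C+\tilde C)^2\Delta^2$ in $\bar C$ is not slack reserved for that purpose. You are right that an orbit could in principle dip below $|x|_H=\mu/\Delta$; the paper does not resolve this either --- it simply restricts to orbit segments that stay outside the $|\cdot|_H$-ball of radius $\mu/\Delta$ before summing --- so if you want the statement literally as written you would need a separate argument there, but relative to the paper's own proof this is not a gap in your proposal.
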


\begin{proof} 
We rewrite the assertion of Theorem \ref{thm:shadow} in terms of $|\cdot|_H$, that is,  
\begin{equation*}\label{eqn:thm1_in_H_norm}
|F(x)|_H\geq\frac{\mu}{\Delta} \quad\Longrightarrow\quad |T^2(x) - F(x)|_H \le \frac{C_{2}}{|F(x)|_H}
\end{equation*}
where $C_{2}=\mu^2(6C+\tilde C)$ and where we used $H(F(x))=H(x)$, i.e., $|F(x)|_H=|x|_H$. In particular, we may apply Lemma \ref{micro_lemma} with $a=T^2(x)$ and $b=F(x)$ and constants $C=C_{2}$ and $\Lambda=\frac{\mu}{\Delta}$ to obtain
\begin{equation}\label{eq:HHH}
\begin{aligned}
||T^2(x)|_H^2-|x|_H^2|=||T^2(x)|_H^2-|F(x)|_H^2|\leq 2C_{2}+\frac{3C_{2}^2\Delta^2}{\mu^2}=: \bar C.
\end{aligned}
\end{equation}
Now we consider an orbit segment $x, T^2(x), T^4(x),\ldots,T^{2k}(x)$ that lies outside of the $|\cdot|_H$-ball of radius $\frac{\mu}{\Delta}$. Then we may apply inequality \eqref{eq:HHH} to each $||T^{2j}(x)|_H^2-|T^{2j-2}(x)|_H^2|$, $j=1,\ldots,k$. Summing these $k$ inequalities, we obtain 
$$
||T^{2k}(x)|_H^2-|x|_H^2|< \bar C k,
$$ 
as claimed.
%
%
%
%
%
%
%
%
\proofend
\end{proof}

\begin{remark}
{\rm As we mentioned in the introduction, precious little is known about the outer symplectic billiard dynamics in higher dimensions. In dimension 2, one has an example of an open set of orbits escaping to infinity with the rate of order $\sqrt{k}$. This happens when the outer billiard table is a semicircle featured in Figure \ref{halfcircle} (which is neither smooth nor strictly convex), see \cite{DF}. 

A more extreme example is that of a segment: then the map $T^2$ is a parallel translation (in the direction of the segment and distance twice its length). In this case, the rate of escape of the orbits to infinity is linear in $k$. But if the segment is replaced by a very thin ellipse, then all orbits will be confined to homothetic ellipses (the outer symplectic billiard map about an ellipse is integrable: its invariant curves are homothetic ellipses, see \cite{ACT}). 
}
\end{remark}

The next theorem concerns the periods of periodic orbits far away from the origin. Loosely speaking, such an orbit must have a large period (see Theorem \ref {thm:per} below for the exact statement). For results on the existence of periodic points, we refer the reader to \cite{ACT,Ta1,Ta3}. Note also that the outer symplectic billiard map does not have fixed points or 2-periodic points.  

Consider the compact spherical shell $S=\big\{x\ |\ \frac{1}{2} \le |x| \le  \frac{3}{2}\big\}$. Let $m >0$ be the minimum of $|V(x)|$ in $S$ for our non-vanishing homogeneous of degree 0 vector field $V$. We point out that due to 0-homogeneity $m$ is equivalently the minimum of $|V(x)|$ on $\R^{2d}\setminus\{0\}$. By uniform continuity of $V$ in $S$, we have 
\begin{equation*} 
\exists \eta > 0\ {\rm such\  that\ if}\ x,y\in S\ {\rm satisfy}\ |x-y|<\eta,\ {\rm then}\ |V(x)-V(y)| < \frac{m}{2}. 
\end{equation*}
Without loss of generality, we may assume that $\eta < \frac{1}{2}$.


\begin{lemma} \label{lm:2pts}
Let $z$ and $p$ be two points in $\R^{2d}\setminus\{0\}$ with $|z-p| < \eta |z|$.
Then one has $\frac{p}{|z|} \in S$ and $|V(z)-V(p)| < \frac{m}{2}$.
\end{lemma}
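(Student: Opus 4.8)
The plan is to exploit the $0$-homogeneity of $V$ to reduce the statement to the uniform-continuity property recorded just before the lemma. The key observation is that $V(z) = V\!\left(\frac{z}{|z|}\right)$ and $V(p) = V\!\left(\frac{p}{|z|}\right)$, since scaling by the positive constant $1/|z|$ does not change the value of a degree-zero homogeneous vector field. So it suffices to check that both rescaled points lie in the shell $S$ and that they are within $\eta$ of each other in the Euclidean norm, after which the displayed uniform-continuity statement gives $|V(z)-V(p)| < m/2$ directly.

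First I would note $\left|\frac{z}{|z|}\right| = 1$, so $\frac{z}{|z|} \in S$ trivially. Next, dividing the hypothesis $|z-p| < \eta|z|$ by $|z|$ gives $\left|\frac{z}{|z|} - \frac{p}{|z|}\right| < \eta$. Then by the (reverse) triangle inequality, $\left|\frac{p}{|z|}\right| \le \left|\frac{z}{|z|}\right| + \eta = 1 + \eta < \frac{3}{2}$ and $\left|\frac{p}{|z|}\right| \ge 1 - \eta > \frac{1}{2}$, where I use the standing assumption $\eta < \frac{1}{2}$; hence $\frac{p}{|z|} \in S$. This establishes the first claimed conclusion $\frac{p}{|z|} \in S$.

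Finally, applying the uniform-continuity property to the pair $x := \frac{z}{|z|}$, $y := \frac{p}{|z|}$ (both in $S$, distance $< \eta$ apart) yields $\left|V\!\left(\frac{z}{|z|}\right) - V\!\left(\frac{p}{|z|}\right)\right| < \frac{m}{2}$, and by $0$-homogeneity this is exactly $|V(z) - V(p)| < \frac{m}{2}$.

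This proof is entirely routine — there is essentially no obstacle. The only thing to be slightly careful about is bookkeeping with the homogeneity: one must apply it to $p$ using the scaling factor $1/|z|$ (not $1/|p|$), which is precisely what makes the difference $\frac{z}{|z|} - \frac{p}{|z|}$ a genuine rescaling of $z - p$; using $1/|p|$ instead would not control the distance. Everything else is the triangle inequality and the preassembled modulus-of-continuity statement.
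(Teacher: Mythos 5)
Your proof is correct and follows essentially the same route as the paper's: rescale both points by $1/|z|$, verify via the triangle inequality and $\eta<\tfrac12$ that both land in $S$ at distance less than $\eta$, and invoke $0$-homogeneity together with the uniform-continuity statement. Nothing to add.
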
 

\proof
First, we claim that
$$
\frac{1}{2} < \frac{|p|}{|z|} <\frac{3}{2},
$$
i.e., $\frac{p}{|z|} \in S$. Indeed, by the triangle inequality,
$$
|p| \le |z|+|p-z| < (1+\eta) |z| < \frac{3}{2} |z|
$$
and
$$
|p| \ge |z| - |p-z| > (1-\eta) |z| > \frac{1}{2} |z|.
$$
In total, we have 
$$
\frac{z}{|z|} \in S,\  \frac{p}{|z|} \in S,\ \left|\frac{z}{|z|} - \frac{p}{|z|}\right| < \eta.
$$
Hence, since the field $V$ is homogeneous of degree zero, 
$$
|V(z)-V(p)| = \left| V\left(\frac{z}{|z|}\right) - V\left(\frac{p}{|z|}\right)\right| < \frac{m}{2},
$$
as needed.
\proofend

We are ready for the theorem.

\begin{theorem} \label{thm:per}
Given $k$, there exists an explicit constant $\rho\equiv\rho(k,M) >0$ such that there are no $k$-periodic orbits of the outer symplectic billiard map outside of the ball of radius $\rho$.
\end{theorem}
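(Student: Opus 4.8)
The plan is to argue by contradiction: assume there is a $k$-periodic orbit $x_0, x_1 = T^2(x_0), \dots, x_k = x_0$ of $T^2$ lying entirely outside a large ball of radius $\rho$, and derive a contradiction for $\rho$ chosen explicitly in terms of $k$ and $M$. The heuristic is that, by Theorem \ref{thm:shadow}, each step $x_{j+1} - x_j$ is $O(1/\rho)$-close to $V(x_j)$, which has length at least $m > 0$; so over $k$ steps the orbit should ``drift'' a definite amount in the direction prescribed by $V$ and cannot close up. To make this precise, I would first use the shadowing estimate together with the escape estimate of Theorem \ref{thm:esc} to control how far the orbit can wander: since $\big||x_j|_H^2 - |x_0|_H^2\big| \le \bar C j \le \bar C k$, the whole orbit stays in an annulus whose inner and outer radii differ by a factor that tends to $1$ as $\rho \to \infty$. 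Concretely, for $\rho$ large enough (depending on $k$, $\bar C$, $\mu$), every $x_j$ satisfies $|x_j - x_0| < \eta |x_0|$, so Lemma \ref{lm:2pts} applies with $z = x_0$, $p = x_j$ and gives $|V(x_j) - V(x_0)| < m/2$ for all $j$.

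Next I would telescope. Writing $e_j := x_{j+1} - x_j - V(x_j)$, Theorem \ref{thm:shadow} (in the form $|T^2(x) - F(x)| \le (6C+\tilde C)/|x|$ plus the Taylor estimate $|F(x) - x - V(x)| \le \tilde C/|x|$ from Lemma \ref{lm:taylor}, or directly inequality \eqref{eq:TV}) gives $|e_j| \le (6C+\tilde C)/|x_j|$, hence $|e_j| \le 2(6C+\tilde C)/\rho$ for $\rho$ large (using that $|x_j| \ge \rho/2$ inside the annulus, say). Summing $x_{j+1} - x_j = V(x_j) + e_j$ over $j = 0, \dots, k-1$ and using periodicity $x_k = x_0$,
\begin{equation}\nonumber
0 = \sum_{j=0}^{k-1} V(x_j) + \sum_{j=0}^{k-1} e_j .
\end{equation}
Now I would take the inner product of this identity with the fixed unit vector $u := V(x_0)/|V(x_0)|$. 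Each term $\langle V(x_j), u\rangle = \langle V(x_0), u\rangle + \langle V(x_j) - V(x_0), u\rangle \ge |V(x_0)| - m/2 \ge m - m/2 = m/2$, by Lemma \ref{lm:2pts} and the definition of $m$. Therefore $\big\langle \sum_j V(x_j), u\big\rangle \ge km/2$, while $\big|\langle \sum_j e_j, u\rangle\big| \le \sum_j |e_j| \le 2k(6C+\tilde C)/\rho$. Combining, $0 \ge km/2 - 2k(6C+\tilde C)/\rho$, i.e. $\rho \le 4(6C+\tilde C)/m$, which is false once
\begin{equation}\nonumber
\rho > \frac{4(6C+\tilde C)}{m}
\end{equation}
and $\rho$ is also large enough to guarantee (via Theorem \ref{thm:esc}) that the orbit stays in the annulus where $|x_j - x_0| < \eta|x_0|$ and $|x_j| \ge \rho/2$. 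An explicit sufficient choice of $\rho(k,M)$ is then read off by bookkeeping these two requirements; the dependence on $k$ enters only through the annulus-confinement condition, which needs $\rho^2$ to dominate something like $\bar C k$ after converting $|\cdot|_H$ to $|\cdot|$.

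The main obstacle — really the only delicate point — is the bookkeeping in the confinement step: I must convert the $|\cdot|_H^2$ deviation bound $\bar C k$ from Theorem \ref{thm:esc} into a genuine Euclidean bound $|x_j - x_0| < \eta|x_0|$ uniform over $j \le k$, and simultaneously ensure $|x_j| \ge \rho/2$ so that the per-step error $|e_j|$ is $\le 2(6C+\tilde C)/\rho$. This requires choosing $\rho$ so that $\bar C k / \rho^2$ (up to the $\mu$-factors relating $|\cdot|_H$ and $|\cdot|$) is small compared to $\eta^2$; since $\eta$, $m$, $\bar C$, $C$, $\tilde C$, $\mu$, $\Delta$ are all explicit constants depending only on $M$, this produces an explicit $\rho = \rho(k,M)$, growing like $\sqrt{k}$. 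Everything else is a direct application of the already-established Lemmas \ref{lm:eps}, \ref{lm:taylor}, \ref{lm:2pts} and Theorems \ref{thm:shadow}, \ref{thm:esc}, so no new ideas beyond the telescoping-plus-projection argument are needed.
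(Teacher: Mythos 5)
Your overall strategy (telescope $x_{j+1}-x_j=V(x_j)+e_j$ over the period, bound $\sum e_j$ by the shadowing error, and bound $\big|\sum V(x_j)\big|$ from below by $km/2$ using Lemma \ref{lm:2pts}) is exactly the paper's argument; your projection onto $u=V(x_0)/|V(x_0)|$ is just a cosmetic variant of the reverse triangle inequality the paper uses, and both yield $|x_0|\le O(C/m)$. However, there is a genuine gap in your confinement step. You invoke Theorem \ref{thm:esc} to conclude that the orbit stays in a thin annulus and then claim this gives $|x_j-x_0|<\eta|x_0|$. It does not: Theorem \ref{thm:esc} controls only the \emph{norms} $|x_j|_H$, not the positions, and two points on (nearly) the same sphere can be antipodal, with $|x_j-x_0|\approx 2|x_0|\gg\eta|x_0|$. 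Since $V$ is homogeneous of degree zero, $V(x_j)$ depends only on the \emph{direction} of $x_j$, which is precisely the information that norm control cannot supply; so the key inequality $|V(x_j)-V(x_0)|<m/2$ does not follow from your annulus argument, and the $km/2$ lower bound collapses.

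The fix is more elementary than what you attempted, and it is what the paper does: each application of $T^2$ is a composition of two point reflections in points of $M$, so $|T^2(x)-x|\le 2\operatorname{diam}(M)=2C_1$ unconditionally, whence $|x_j-x_0|\le 2(k-1)C_1$ for all $j$. Requiring $|x_0|>2(k-1)C_1/\eta$ (and $|x_0|\ge \frac1\delta+2(k-1)C_1$ so that \eqref{eq:TV} applies to every $x_j$) then puts you in the hypotheses of Lemma \ref{lm:2pts} with $z=x_0$, $p=x_j$, giving both $|p_j|\ge|x_0|/2$ and $|V(x_j)-V(x_0)|<m/2$. Note that with this (correct) confinement the resulting threshold $\rho(k,M)$ grows \emph{linearly} in $k$, not like $\sqrt{k}$ as you predicted; Theorem \ref{thm:esc} plays no role in the paper's proof of this theorem.
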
 

\begin{remark}
{\rm
The explicit value of the constant $\rho(k,M)$ can be found at the end of the proof. A closer look at the involved constants shows that $\rho(k,cM)=c\rho(k,M)$, i.e., $\rho$ scales as the map $T^2$ does.
}
\end{remark}

\begin{proof}
A $k$-periodic orbit $z$ of $T$ is also a $k$-periodic orbit of $T^2$. Let us denote by
$$
z:=p_1,p_2,\ldots,p_k:\ p_{i+1}=T^2(p_i),\ i=1,\ldots,k-1,\ T^2(p_k)=p_1
$$
the entire orbit of $T^2$. Recall that $C_1$ is the diameter of $M$. Then $|T^2(x)-x|\le 2C_1$ for every point $x$ by the definition of $T$, see Figure \ref{far}. In particular, $|p_{i+1}-p_i| \le 2C_1$, and hence, by the triangle inequality, 
\begin{equation} \label{eq:zp}
|z-p_i|\le 2(k-1)C_1,\ i=1,\ldots,k-1. 
\end{equation}
We want to use the estimate (\ref{eq:TV}) for every $p_i$. For that, one needs $|p_i| \ge \frac{1}{\delta}$ to hold for all $i=1,\ldots,k-1$. In view of (\ref{eq:zp}) and the triangle inequality, this will hold as long as $|z| \ge \frac{1}{\delta} + 2(k-1)C_1$. Therefore, we assume from now on that
$$
|z| \ge \frac{1}{\delta} + 2(k-1)C_1.
$$
Then  \eqref{eq:TV}, applied to $p_i$, gives
$$
|p_{i+1}-p_i-V(p_i)| \le \frac{6C}{|p_i|}
$$
for all $i$. It follows then from the telescope trick and triangle inequality that 
%
\begin{equation}\label{eq:VP}
\begin{aligned}
\left|\sum_{i=1}^k V(p_i)\right| &= \left|\sum_{i=1}^k (p_{i+1}-p_i-V(p_i))\right|\\
& \le \sum_{i=1}^k |p_{i+1}-p_i-V(p_i)| \\
&\le 6C \sum_{i=1}^k \frac{1}{|p_i|}.
\end{aligned}
\end{equation}
Recall the quantity $\eta$ from Lemma \ref{lm:2pts} and let us assume, in addition, that  
$$
|z| > \frac{2(k-1)C_1}{\eta}.
$$
If we combine this with \eqref{eq:zp}, we immediately obtain $|z-p_i| \le \eta |z|$ for all $i$. In particular, we can apply Lemma \ref{lm:2pts} and conclude that $\frac{|p_i|}{|z|}\geq\frac12$, and 
therefore 
$$
\sum_{i=1}^k \frac{1}{|p_i|} \leq \frac{2k}{|z|}.
$$
Combining this with \eqref{eq:VP} leads to
\begin{equation} \label{eq:VPC}
\left|\sum_{i=1}^k V(p_i)\right| \leq \frac{12kC}{|z|}.
\end{equation}
On the other hand, Lemma \ref{lm:2pts} asserts that $|V(z)-V(p_i)| < \frac{m}{2}$ for all $i$ where $\displaystyle m=\min_{x\in\R^{2d}\setminus\{0\}}|V(x)|$. In particular, $|V(z)| \ge m$, and it follows that
\begin{equation}\nonumber
\begin{aligned}
\left|\sum_{i=1}^k V(p_i)\right| &= \left|\sum_{i=1}^k [V(p_i)-V(z)] +kV(z)\right|\\ 
&\ge k|V(z)| - \sum_{i=1}^k |V(z)-V(p_i)| \\
&\ge \frac{km}{2}.
\end{aligned}
\end{equation}
Combined with (\ref{eq:VPC}), we have 
$$
\frac{12kC}{|z|} \geq \frac{km}{2}\ \ \ {\rm or}\ \ \ |z| \leq \frac{24C}{m}.
$$ 
In conclusion, we showed that
$$
|z| >\max\left\{\frac{1}{\delta} + 2(k-1)C_1,\frac{2(k-1)C_1}{\eta}\right\}\quad\Longrightarrow\quad |z| \leq \frac{24C}{m}
$$
for a $k$-periodic orbit $z$. In other words, if $z$ is a $k$-periodic orbit then
\begin{equation}\nonumber
|z|\leq\max \left\{\frac{1}{\delta} + 2(k-1)C_1,\ \frac{2(k-1)C_1}{\eta},\ \frac{24C}{m}\right\}=:\rho(k,M),
\end{equation}
as needed.
%
%
\proofend
\end{proof}

\begin{remark} 
{\rm Convexity plays a critical role in our considerations. We present a (rather extreme) example of what may happen otherwise.

Consider two oriented concentric circles. The respective outer billiard maps, $T_1$ and $T_2$, commute, hence every points is fixed by the map $T_2^{-1} T_1^{-1} T_2 T_1$. Properly oriented red arcs of these two circles in Figure \ref{4far}  can be combined into a non-convex (and self-intersecting) closed oriented curve, and the outer billiard map with respect to this curve will have arbitrarily distant 4-periodic orbits.
A similar construction can be made in higher dimensions.
\begin{figure}[hbtp] 
\centering
\includegraphics[width=5.5in]{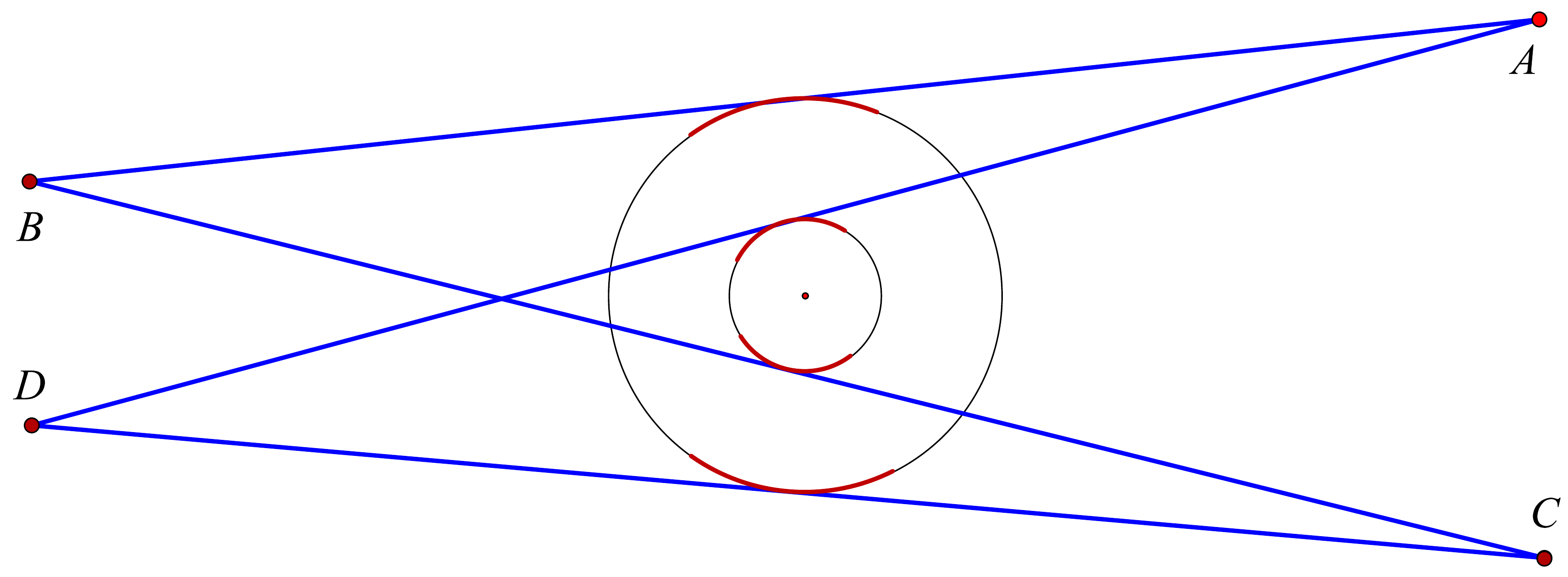}
\caption{A 4-periodic orbit $A\to B \to C \to D \to A$.}
\label{4far}
\end{figure}
}
\end{remark}
Finally, as promised, an elementary lemma. 

\begin{lemma}\label{micro_lemma}
Let $a,b\in \R^n$ satisfy, with respect to some norm $|\cdot|$ and constants $C,\Lambda>0$, the estimate
\begin{equation}\nonumber
|b|\geq\frac1\Lambda \quad\Longrightarrow\quad |a-b|\leq \frac{C}{|b|}\leq C\Lambda.
\end{equation} 
Then 
\begin{equation}\nonumber
|b|\geq\frac1\Lambda \quad\Longrightarrow\quad ||a|^2-|b|^2|\leq 2C+3C^2\Lambda^2
\end{equation}
holds.
\end{lemma}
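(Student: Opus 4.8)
The plan is to write $|a|^2-|b|^2 = (|a|-|b|)(|a|+|b|)$ and control each factor separately using the reverse triangle inequality $\bigl||a|-|b|\bigr|\leq|a-b|$. First I would assume $|b|\geq\frac1\Lambda$, so that the hypothesis gives $|a-b|\leq\frac{C}{|b|}\leq C\Lambda$, and hence
$$
\bigl||a|-|b|\bigr|\leq|a-b|\leq\frac{C}{|b|}.
$$
For the second factor, $|a|+|b|\leq (|b|+|a-b|)+|b| = 2|b|+|a-b|\leq 2|b|+C\Lambda$. Multiplying,
$$
\bigl||a|^2-|b|^2\bigr| = \bigl||a|-|b|\bigr|\cdot\bigl(|a|+|b|\bigr) \leq \frac{C}{|b|}\bigl(2|b|+C\Lambda\bigr) = 2C + \frac{C^2\Lambda}{|b|}.
$$
Finally, using $|b|\geq\frac1\Lambda$ once more, i.e. $\frac{1}{|b|}\leq\Lambda$, gives $\frac{C^2\Lambda}{|b|}\leq C^2\Lambda^2\leq 3C^2\Lambda^2$, so $\bigl||a|^2-|b|^2\bigr|\leq 2C+3C^2\Lambda^2$, which is the claim. (The slack factor of $3$ is harmless and only present because the calling code in Theorem \ref{thm:esc} finds that form convenient; one could equally state the sharper bound $2C+C^2\Lambda^2$.)

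There is essentially no obstacle here: the only two facts used are the reverse triangle inequality and the hypothesis bound in its two forms $\frac{C}{|b|}$ and $C\Lambda$. The one point requiring a little care is not to double-count — one should bound $|a|+|b|$ by $2|b|+|a-b|$ rather than by $2|a|+|a-b|$ or similar, so that the explicit $\frac{1}{|b|}$ from the first factor cancels the leading $2|b|$ cleanly and leaves a genuinely bounded remainder. Everything is a one-line computation once the factorization is written down.
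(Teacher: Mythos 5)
Your proof is correct. You take a slightly different route from the paper: you factor $|a|^2-|b|^2=\bigl(|a|-|b|\bigr)\bigl(|a|+|b|\bigr)$ and control the two factors by the reverse triangle inequality and by $|a|+|b|\leq 2|b|+|a-b|$, whereas the paper proves the two one-sided inequalities separately by expanding $\bigl(|a-b|+|b|\bigr)^2$ and $\bigl(|b-a|+|a|\bigr)^2$; the second expansion is where the paper picks up the asymmetric coefficient $3$ in $3C^2\Lambda^2$. Your factorization is cleaner, treats both signs at once, and in fact yields the sharper symmetric bound $2C+C^2\Lambda^2$, as you note; since only the order of magnitude of the resulting constant $\bar C$ matters in Theorem \ref{thm:esc}, the difference is immaterial downstream. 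Your remark about bounding $|a|+|b|$ by $2|b|+|a-b|$ rather than in terms of $|a|$ is exactly the right point of care, and it is essentially the same care the paper exercises when it substitutes $|a|\leq|b-a|+|b|$ in its second estimate.
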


\begin{proof}
From 
\begin{equation}\nonumber
|a|^2\leq(|a-b|+|b|)^2=|a-b|^2+|b|^2+2|a-b||b|
\end{equation}
we estimate for $|b|\geq\frac1\Lambda$
\begin{equation}\nonumber
\begin{aligned}
|a|^2-|b|^2&\leq \underbrace{|a-b|^2}_{\leq C^2\Lambda^2}+2\underbrace{|a-b|}_{\leq\frac{C}{|b|}}|b|\\
&\leq C^2\Lambda^2+2C.
\end{aligned}
\end{equation}
Similarly,
\begin{equation}\nonumber
\begin{aligned}
|b|^2\leq(|b-a|+|a|)^2&=|b-a|^2+|a|^2+2|a-b|\underbrace{|a|}_{\leq |b-a|+|b|}\\
&\leq |a-b|^2+|a|^2+2|a-b|^2+2|a-b||b|\\
&= |a|^2+3|a-b|^2+2|a-b||b|\\
&\leq |a|^2+3C^2\Lambda^2+2C,
\end{aligned}
\end{equation}
and we arrive at 
\begin{equation}\nonumber
-C^2\Lambda^2-2C\leq |b|^2-|a|^2\leq3C^2\Lambda^2+2C.
\end{equation}
This proves the claimed estimates.
\proofend 
\end{proof}

\end{document}